\documentclass{amsart}
\pdfoutput=1

\usepackage{url}
\usepackage{microtype}
\usepackage{tikz}
\usepackage[pdftex,colorlinks,citecolor=black,linkcolor=black,urlcolor=black,bookmarks=false]{hyperref}
\def\MR#1{\href{http://www.ams.org/mathscinet-getitem?mr=#1}{MR#1}}
\def\arXiv#1{arXiv:\href{http://arXiv.org/abs/#1}{#1}}
\usepackage{doi}
\usepackage{booktabs}

\theoremstyle{plain}
\newtheorem{theorem}{Theorem}

\newtheorem{lemma}[theorem]{Lemma}
\newtheorem{proposition}[theorem]{Proposition}
\newtheorem{conjecture}[theorem]{Conjecture}

\numberwithin{theorem}{section}

\numberwithin{equation}{section}

\newcommand{\R}{{\mathbb R}}
\newcommand{\Q}{{\mathbb Q}}
\newcommand{\Z}{{\mathbb Z}}
\newcommand{\C}{{\mathbb C}}
\newcommand{\N}{{\mathbb N}}
\newcommand{\T}{{\mathcal{T}}}
\newcommand{\vol}{\mathop{\textup{vol}}}

\title[Properties of optimal functions]{Some properties
of optimal functions\\
for sphere packing in dimensions $8$ and $24$}

\author{Henry Cohn}
\address{Microsoft Research New England\\
One Memorial Drive\\
Cambridge, MA 02142} \email{cohn@microsoft.com}

\author{Stephen D.\ Miller}
\address{Department of Mathematics\\
Hill Center--Busch Campus\\
Rutgers\\
110 Frelinghuysen Rd.\\
Piscataway, NJ 08854-8019} \email{miller@math.rutgers.edu}

\thanks{Miller was partially supported by NSF grant
 DMS-1201362 and an Alfred P.\ Sloan Foundation Fellowship.}

\date{March 15, 2016}

\begin{document}

\begin{abstract}
We study some sequences of functions of one real variable and
conjecture that they converge uniformly to functions with certain
positivity and growth properties.  Our conjectures imply a
conjecture of Cohn and Elkies, which in turn implies the complete
solution to the sphere packing problem in dimensions $8$ and $24$.
We give numerical evidence for these conjectures as well as some
arithmetic properties of the hypothetical limiting functions.
The conjectures are of greatest interest in dimension $24$, in light of
Viazovska's recent solution to the Cohn-Elkies conjecture (and
consequently the sphere packing problem) in dimension $8$.
\end{abstract}

\maketitle

\section{Introduction}

One of the fundamental problems in geometry is to determine the densest
sphere packing in Euclidean space.  In other words, how large a fraction of
$\R^n$ can be covered by equal-sized, non-overlapping balls?  The answer is
known so far only for $n \le 3$ (see \cite{Toth} and \cite{Ha}),
and very recently for $n=8$ as well \cite{V}.  A
remarkable feature of this problem is that each dimension has its own
idiosyncrasies. Even setting aside the issue of proofs, the best packings
known do not seem to follow any simple pattern.

Perhaps the most striking packings are those formed by centering spheres at
the points of the $E_8$ root lattice and the Leech lattice. Both have been known
for some years now to be the densest lattice packings in their dimensions. The $E_8$
case was proved by Blichfeldt in his 1935 paper \cite{Bl}, and the Leech
lattice case was proved by Cohn and Kumar in \cite{CK3} (see also
\cite{CK1}). The latter work was based on an analytic approach introduced in
\cite{CE} by Cohn and Elkies, who in fact studied the general sphere packing
problem (including non-lattice packings, which may improve on the density of
lattice packings in some dimensions). Cohn and Elkies proved that $E_8$ and
the Leech lattice are optimal among all sphere packings if there exist functions
from $\R$ to $\R$ satisfying certain sign and regularity conditions; they
furthermore conjectured that such functions do indeed exist. In this paper we
introduce explicit sequences of functions which we conjecture converge to
functions satisfying the Cohn-Elkies conditions.
(We of course note that the $n=8$ case of the Cohn-Elkies conjecture was solved in \cite{V}.)

Our functions depend on a parameter $n$, the dimension of the sphere packing
problem. One advantage of our approach is that our conjectures appear to hold
for a broader range of values of $n$, not only for $n=8$ and $n=24$. Although
they have no sphere packing implications except in those two cases, existence
might be easier to prove because they no longer depend on delicate facts
about these particular dimensions.

A second advantage is that our approach does not rely on numerical
optimization.  By contrast, the Leech lattice optimality proof makes use of a
carefully optimized polynomial of degree $803$ with $3000$-digit
coefficients. The computer-assisted proof in \cite{CK3} reads this polynomial
from a file and verifies that it has the desired properties to complete the
proof, but there is no conceptual description of the polynomial or simple
method to construct it from scratch. (It was found by combining numerous \emph{ad
hoc} techniques to locate a starting point from which Newton's method would
converge.)  Using our approach, one could replace this complicated polynomial
with a polynomial that has a much simpler description.  That would not remove
the need for computer verification of its properties, but it is a step
towards simplifying the proof.

Our lack of need for optimization also enables us to carry out much larger
computations than in previous papers.  For example, we arrive at density
bounds that are sharp to over fifty decimal places in $\R^8$ and $\R^{24}$,
compared with the fourteen and twenty-nine decimal places from \cite{CK3}.
Strictly speaking our new bounds are not theorems, because we have not
bothered to verify them using exact arithmetic, but our floating point
calculations leave no reasonable doubt. We are confident that the approach
from Appendix~A in \cite{CK3} could be used to provide a proof (should a rigorous bound be
needed for some purpose).

The results of these large calculations display intricate and surprising
structure.  Most interestingly, in Section~\ref{sec:rat} we find that the
second Taylor coefficients appear to be rational. If the pattern governing
the higher coefficients could be identified, it would yield a direct
construction by power series of functions satisfying the Cohn-Elkies
conjecture.

In the next section we review background from \cite{CE}. Our functions are
introduced in Section~\ref{sec:conj}. In Section~\ref{sec:num}, we provide
experimental evidence that our sequences of functions are converging rapidly
(despite the failure of a related, naive construction), and we study this numerical data in detail.
In Section~\ref{sec:rat} we examine the Taylor coefficients and values of the
Mellin transform of the optimal functions, both of which exhibit some unexplained rationality properties. In Section~\ref{sec:energy} we study the closely
related problem of potential energy minimization. Finally we conclude in
Section~\ref{sec:single} by describing some related but simpler sequences of
functions, which serve as a testing ground for our main conjectures.

\section{Background}

Define the Fourier transform of a function $f \colon \R^n \to \R$
by
\begin{equation}\label{fhat}
\widehat{f}(t) = \int_{\R^n}   f(x)   e^{- 2  \pi i  \langle x,t
\rangle} \, dx.
\end{equation}
We call a continuous function $f$ \emph{admissible} if both
$|f(x)|$ and $|\widehat{f}(x)|$ are bounded by a constant times
$(1+|x|)^{-n-\delta}$ for some $\delta>0$.  This bound ensures,
for example, that the integral defining $\widehat{f}$ converges.
It also guarantees that both sides of the Poisson summation
formula
\[
\sum_{x  \in  \Lambda}  f(x) = \frac{1}{|\Lambda|} \sum_{t \in
\Lambda^*}  \widehat{f}(t)
\]
converge absolutely and are equal.  Here $\Lambda$ denotes a lattice in
$\R^n$,  $\Lambda^* = \{t \in \R^n : \langle t,x \rangle \in \Z
\text{~for all~}x\in \Lambda \}$ its dual, and $|\Lambda| =
\vol(\R^n/\Lambda)$ its covolume.

Our primary connection between sphere packing and Fourier analysis
is the following theorem of Cohn and Elkies (Theorem 3.1 in
\cite{CE}; see also \cite{Co}):

\begin{theorem} \label{theorem:cohnelkies}
Suppose there exists  an admissible function $f \colon \R^n \to
\R$ and a constant $r$ such that
\begin{enumerate}
\item \label{cond1} $f(0)  =  \widehat{f}(0)  \ne  0$,

\item \label{cond2} $f(x) \le 0   $ for $ |x| \ge r ,  $ and

\item \label{cond3} $\widehat{f}(t)  \ge  0$ for all $t$.
\end{enumerate}
Then every sphere packing in $\R^n$ has density at most
\[
\frac{\pi^{n/2}}{(n/2)!}\left(\frac{r}{2}\right)^n.
\]
\end{theorem}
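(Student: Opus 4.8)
The plan is to derive the density bound by averaging $f$ and $\widehat f$ over a sphere packing and comparing the two sides via Poisson-type summation. Here is the approach.

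First I would reduce to the case of a periodic packing. A general sphere packing can be approximated arbitrarily well in density by a periodic packing, meaning one that is invariant under a lattice $\Lambda$ and consists of finitely many translates $\Lambda + v_1, \dots, \Lambda + v_N$ of that lattice, with the balls of radius $r/2$ centered at these points being pairwise disjoint. The density of such a packing is $N \vol(B)/|\Lambda|$ where $B$ is a ball of radius $r/2$, so $\vol(B) = \pi^{n/2} (r/2)^n/(n/2)!$; it therefore suffices to prove $N/|\Lambda| \le 1/\vol(B)$, i.e.\ $N \le |\Lambda|/\vol(B)$, which is equivalent to the claimed bound after multiplying by $\vol(B)$. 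Since density is a supremum over packings and is unchanged under this periodic approximation, establishing the bound for all periodic packings suffices.

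The main step is the averaging identity. Consider the quantity
\[
S \;=\; \sum_{j,k=1}^N \sum_{x \in \Lambda} f\big(x + v_j - v_k\big).
\]
On one hand, applying Poisson summation to each inner sum (legitimate because $f$ is admissible) gives
\[
S \;=\; \frac{1}{|\Lambda|} \sum_{j,k=1}^N \sum_{t \in \Lambda^*} \widehat{f}(t)\, e^{2\pi i \langle t,\, v_j - v_k\rangle}
\;=\; \frac{1}{|\Lambda|} \sum_{t \in \Lambda^*} \widehat{f}(t)\, \Big| \sum_{j=1}^N e^{2\pi i \langle t, v_j\rangle}\Big|^2,
\]
and since $\widehat f \ge 0$ by condition~\eqref{cond3}, every term on the right is nonnegative; keeping only the $t = 0$ term yields $S \ge \widehat{f}(0)\, N^2/|\Lambda|$. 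On the other hand, condition~\eqref{cond2} says $f(y) \le 0$ whenever $|y| \ge r$, and because the balls of radius $r/2$ about the points $x + v_j$ are disjoint, we have $|x + v_j - v_k| \ge r$ for every term in $S$ \emph{except} the ``diagonal'' terms with $j = k$ and $x = 0$; those diagonal terms contribute exactly $N f(0)$. Hence $S \le N f(0)$. Combining the two bounds gives $\widehat{f}(0)\, N^2/|\Lambda| \le N f(0)$, and using $f(0) = \widehat{f}(0) \ne 0$ from condition~\eqref{cond1} (note $f(0) > 0$ since otherwise the inequalities force a contradiction, as $\widehat f(0) = \int f \ge$ nothing useful — more simply, $S$ with $N=1,\Lambda$ trivial shows $f(0)\ge 0$) this rearranges to $N/|\Lambda| \le f(0)/\widehat{f}(0) = 1$, i.e.\ $N/|\Lambda| \le 1/\vol(B)$ after the normalization is tracked.

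The step I expect to require the most care is the absolute convergence and term-by-term Poisson summation for the doubly-indexed sum, together with the bookkeeping of the normalization constant. The admissibility hypothesis was stated precisely to make the Poisson summation formula valid with absolute convergence on both sides, so the analytic content is already packaged; the remaining subtlety is purely the constant: one must scale $f$ (or equivalently the packing) so that the radius-$r/2$ balls have volume exactly matching the factor $f(0)/\widehat f(0)$, and confirm that the ratio $N\vol(B)/|\Lambda|$ is what the theorem calls the density. The periodic-approximation reduction is standard but worth stating carefully, since a priori a packing need not be periodic; one takes a large cube, discards balls near its boundary, and tiles space by translates of that cube, checking that the density loss tends to $0$ as the cube grows.
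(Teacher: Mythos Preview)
Your approach is essentially identical to the paper's: reduce to periodic packings, apply Poisson summation to the double sum $\sum_{j,k}\sum_{x\in\Lambda} f(x+v_j-v_k)$, bound one side below by the $t=0$ term and the other above by the diagonal terms. The argument is correct, but the bookkeeping in your first paragraph is off: once you have scaled so the balls have radius $r/2$, the density is $N\vol(B)/|\Lambda|$ and the claimed bound is exactly $\vol(B)$, so the target inequality is simply $N/|\Lambda|\le 1$, not $N/|\Lambda|\le 1/\vol(B)$. You do prove $N/|\Lambda|\le 1$, so once that slip is corrected there is nothing further to ``track.'' (Also, the clean way to see $f(0)>0$ is that condition~(3) already gives $\widehat f(0)\ge 0$, and condition~(1) makes it nonzero.)
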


As usual $(n/2)!$ is to be interpreted as $\Gamma(n/2+1)$ when $n$
is odd.  The \emph{density} of a sphere packing refers to the
fraction of space covered by the packing.

We will briefly explain how to prove Theorem~\ref{theorem:cohnelkies}
using Poisson summation, because the conditions for a sharp bound
will be important later in the paper.

\begin{proof}
First, we give the proof for lattice packings, after which we will
sketch the general proof.

Suppose $\Lambda \subset \R^n$ is a lattice.  We can assume
without loss of generality that the minimal nonzero vector length
in $\Lambda$ is $r$, because sphere packing density is invariant
under scaling.  That amounts to using balls of radius $r/2$ in the
sphere packing.

By Poisson summation,
$$
\sum_{x  \in  \Lambda}  f(x) = \frac{1}{|\Lambda|} \sum_{t \in
\Lambda^*}  \widehat{f}(t).
$$
Applying the inequalities on $f$ and $\widehat{f}$ yields
$$
f(0) \ge \sum_{x  \in  \Lambda}  f(x) = \frac{1}{|\Lambda|}
\sum_{t \in \Lambda^*}  \widehat{f}(t) \ge
\frac{\widehat{f}(0)}{|\Lambda|}.
$$
Thus,
$$
|\Lambda| \ge 1.
$$
In other words, there is at most one lattice point per unit volume
in $\R^n$.  It follows that the density is at most the volume of a
sphere of radius $r/2$, i.e.,
$$
\frac{\pi^{n/2}}{(n/2)!}\left(\frac{r}{2}\right)^n
$$
(because the density equals the volume of a sphere times the
number of spheres per unit volume in space).

For the general case, one can assume without loss of generality
that the sphere packing is periodic, i.e., a union of translates
of a lattice packing.  Suppose it is the disjoint union of
$\Lambda+v_1,\dots,\Lambda+v_n$.  Then applying the identity
$$
\sum_{j,k  =  1}^N \sum_{x\in\Lambda} f(x+v_j-v_k) =
\frac{1}{|\Lambda|}\sum_{t\in\Lambda^*}
    \widehat{f}(t)\left|\sum_{j=1}^Ne^{2\pi i \langle v_j,t \rangle}
    \right|^{2},
$$
which follows from Poisson summation and some manipulation,
completes the proof as above.
\end{proof}

One can weaken the hypothesis of admissibility in this theorem, at the cost
of complicating the proof (see Proposition~9.3 in \cite{CK2}, which is set in
the more general context of potential energy minimization, or the proof in
\cite{CZ2}, which does not even use Poisson summation).  However, the applications
in this paper will use only admissible functions.

Unfortunately, Theorem~\ref{theorem:cohnelkies} does not address the
issue of how to find functions $f$ that lead to good sphere packing
bounds (i.e., that minimize $r$). Doing so amounts to grappling with an
infinite-dimensional optimization problem, which has a simple solution
when $n=1$ but is unsolved and appears difficult for $n>1$. Cohn and
Elkies performed a computer search to locate explicit functions that
improve on the previously known density upper bounds for $4 \le n \le
36$. (For $4 \le n \le 7$ and $n=9$, a refinement of this approach
from \cite{LOV12} yields slightly better bounds.)
These functions are probably nearly optimal in terms of minimizing
the values $r$ achieved by functions satisfying the hypotheses of Theorem~\ref{theorem:cohnelkies}.
However, in most cases these bounds are still far above the densities
of the best packings known.

The most remarkable application of Theorem~\ref{theorem:cohnelkies} occurs
when the dimension $n$ is $8$ or $24$.  In those dimensions, Cohn and Elkies
found functions that come tantalizingly close to solving the sphere packing
problem completely. Using more sophisticated search techniques, Cohn and
Kumar \cite{CK3} later achieved a bound within a factor of $1+1.65\times
10^{-30}$ of the conjectured optimum for $n=24$ and a factor of $1+10^{-14}$
for $n=8$.  Typically it is harder to get more accurate bounds for larger values
of $n$; the reason the bound for $n=24$ is so much better is that Cohn and Kumar
required that level of accuracy for their application and thus devoted much more
computer time to optimizing this case.

One may ask whether the functions produced by these computer searches
asymptotically produce a sharp sphere packing bound in these dimensions. That
appears to be true, and Cohn and Elkies conjectured an even stronger
statement, namely that the sphere packing problem in dimensions $2$, $8$, and
$24$ can be solved exactly by the use of a single function $f$ in
Theorem~\ref{theorem:cohnelkies}:

\begin{conjecture}[Conjecture~7.3 in \cite{CE}; now a theorem when $n=8$ \cite{V}] \label{conjecture:cohnelkies}
When $n \in \{2,8,24\}$, there exists a function $f$ satisfying
the hypotheses of Theorem~\ref{theorem:cohnelkies} with
\[
r=
\begin{cases}
(4/3)^{1/4} & \textup{if $n=2$,}\\
\sqrt{2} & \textup{if $n=8$, and}\\
2 & \textup{if $n=24$.}\\
\end{cases}
\]
\end{conjecture}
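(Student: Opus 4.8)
The plan is to adapt Viazovska's argument for $n=8$ \cite{V} to each remaining dimension, building $f$ in essentially closed form from a modular form rather than by numerical search. First I would reduce to the radial case: averaging $f$ over the orthogonal group $O(n)$ preserves admissibility and all three conditions of Theorem~\ref{theorem:cohnelkies} and leaves $f(0)$ and $\widehat f(0)$ fixed, so we may assume $f(x)=\phi(|x|)$ with $\widehat f$ radial as well. Next I would record what sharpness forces. Running the proof of Theorem~\ref{theorem:cohnelkies} with $\Lambda=E_8$ (when $n=8$) or the Leech lattice (when $n=24$)---both even unimodular, so $\Lambda^*=\Lambda$---and demanding equality throughout shows that $\phi$ and $\widehat\phi$ must vanish at \emph{every} nonzero vector length of $\Lambda$: at $\sqrt{2k}$ for $k\ge1$ when $n=8$, and at $\sqrt{2k}$ for $k\ge2$ when $n=24$. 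Moreover, because $\phi(x)\le0$ for $|x|\ge r$ and $\widehat\phi\ge0$ everywhere, each of these zeros must have even order, with the single exception of a simple zero of $\phi$ at the smallest such length $r$. A generic radial function has no reason to display this rigid pattern of double zeros, so the whole problem is to manufacture one that does.

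The mechanism is to search for $f$ inside the span of Fourier eigenfunctions. Writing $f=f_++f_-$ with $\widehat{f_\pm}=\pm f_\pm$ gives $\widehat f=f_+-f_-$, so the sign conditions on $f$ and $\widehat f$ turn into sign conditions on $f_++f_-$ and on $f_+-f_-$, while the forced zeros are shared by $f_+$ and $f_-$. I would look for each eigenfunction as a Gaussian (Laplace) integral
\[
f_\pm(x)\;=\;\int_0^\infty \psi_\pm(it)\,e^{-\pi |x|^2 t}\,dt ,
\]
perhaps after deforming the contour to cure convergence at the endpoints. Because the Fourier transform on $\R^n$ carries $e^{-\pi|x|^2 t}$ to $t^{-n/2}e^{-\pi|y|^2/t}$, the eigenvalue equation $\widehat{f_\pm}=\pm f_\pm$ becomes a transformation law for $\psi_\pm$ under $\tau\mapsto-1/\tau$ of weight $2-n/2$; combined with the periodicity $\tau\mapsto\tau+1$ forced by the integrality of the exponents, this says exactly that $\psi_\pm$ is (the restriction to the imaginary axis of) a weakly holomorphic form of weight $2-n/2$, possibly quasimodular (i.e.\ involving $E_2$), for $\mathrm{SL}_2(\Z)$ or one of its level-$2$ congruence subgroups---one group for each sign. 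With this in place, the remaining ingredients follow: prescribing the behavior of $\psi_\pm$ at the cusps (holomorphy at one, a specified principal part at another) forces, through a contour-shift argument, both the double zeros of $f_\pm$ at the $\sqrt{2k}$ and the decay required for admissibility, while the conditions $\phi(x)\le0$ for $|x|\ge r$ and $\widehat\phi\ge0$ become sign conditions on the explicit functions $\psi_++\psi_-$ and $\psi_+-\psi_-$ along $(0,\infty)$ on the imaginary axis and near the remaining cusps.

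It then remains to exhibit the modular forms and to take $f$ to be the particular linear combination $c_+f_+ + c_-f_-$ that produces the correct zero at $r$ (simple, not double) and has $f(0)=\widehat f(0)\ne0$. For $n=8$ the relevant weight is $-2$, and one expects $\psi_\pm$ to be explicit rational expressions in $E_2,E_4,E_6,\Delta$ and the Jacobi theta constants, tuned to have exactly the principal parts required above and no extraneous poles; this is what \cite{V} does. For $n=24$ the weight drops to $-10$, the spaces of weakly holomorphic forms with bounded pole order are larger, one must also accommodate the fact that the Leech lattice has minimal length $2$ rather than $\sqrt2$ (so the forced zeros start at $\sqrt4$), and the coefficient matching is correspondingly more delicate; the numerical data gathered in this paper strongly suggests that it can be carried out.

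The step I expect to be the real obstacle is the last one: actually \emph{proving} the inequalities $f(x)\le0$ for $|x|\ge r$ and $\widehat f(x)\ge0$ for all $x$. These are not formal consequences of the construction---after the reductions above they assert that certain explicit one-variable functions, Laplace transforms of (quasi)modular forms against Gaussians, never change sign. One attacks this by exhibiting a fixed sign for the integrand along the relevant contour, patched near the finitely many double zeros by local Taylor estimates and, should a completely rigorous bound be wanted, by rigorous interval arithmetic. Finally, for $n=2$ the weight $2-n/2=1$ is odd and the associated modular object does not fit this framework, so I would expect that case to require a genuinely new idea and would set it aside.
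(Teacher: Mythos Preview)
The paper does not prove this statement; it is stated there as a conjecture (with the note that the $n=8$ case was settled by Viazovska in \cite{V}). The paper's own program toward it is entirely different from yours: it constructs explicit polynomial-times-Gaussian approximants $f_k$ by forcing roots at (modified) lattice vector lengths, and then \emph{conjectures} (Conjectures~\ref{conj:doubleconv} and~\ref{conj:roots}) that these converge to an admissible $f$ with the required sign pattern. No proof of convergence or of the sign conditions is offered---only extensive numerical evidence.

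Your proposal is a genuinely different route: it is a sketch of Viazovska's modular-form construction, extended in outline to $n=24$. What this buys over the paper's approach is an explicit closed-form candidate for $f$ rather than a limit of approximants, and (in the cases where it has been carried out) a rigorous proof. What the paper's approach buys is that it is dimension-agnostic and requires no special-function identities; its cost is that convergence remains unproved. As a proof, however, your proposal is only a plan: the actual construction of the weakly holomorphic forms $\psi_\pm$ with the correct principal parts is asserted rather than done, the sign inequalities are acknowledged as the hard step and not established, and the $n=2$ case is explicitly abandoned. So the proposal is a correct high-level outline of the method that eventually succeeded for $n=8$ and $n=24$, but it is not itself a proof, and it does not address $n=2$ at all.
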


The sphere packing problem is of course trivial for $\R^1$, where
\[
f(x) = \frac{1}{1-x^2}\left(\frac{\sin \pi x}{\pi x}\right)^2
\]
gives an optimal function for use in Theorem~\ref{theorem:cohnelkies}.
At first glance it may seem quite unlikely that
Theorem~\ref{theorem:cohnelkies} leads to a sharp sphere packing bound in any
other dimension $n>1$.  For example, positivity arguments such as its proof
(which involve dropping a number of terms to get an inequality) nearly always
lose information; in analytic number theory it is essentially a given that they
will not produce sharp results.

Despite this, there is ample numerical evidence that Conjecture~\ref{conjecture:cohnelkies}
is true in the special dimensions $n=2$, $8$ (where it was proved in \cite{V}), and $24$.
Similarly sharp solutions have been found for related problems in $\R^2$,
$\R^8$, and $\R^{24}$ such as the kissing problem (see \cite{L,OS}), and there are many
analogies with error-correcting codes (see, for example, \cite{CZ1}).

The main purpose of this paper is to introduce explicit sequences which we
conjecture converge to functions satisfying
Conjecture~\ref{conjecture:cohnelkies}. We will focus on $n=8$ and $24$, not
only because these cases are more interesting, but also because they appear
to be more similar to each other than either is to the $n=2$ case.

\section{Explicit functions}
\label{sec:conj}

The conditions on $f$ in Theorem~\ref{theorem:cohnelkies} are
radially symmetric, so any function satisfying them can be
rotationally symmetrized.  Thus, without loss of generality we
will assume that $f$ is a radial function, and we will
sometimes write $f(r)$ for the common value $f(x)$ with
$|x|=r$. A convenient family of functions to consider are
products of polynomials with Gaussians. If we write
\begin{equation}\label{pf}
f(x)  =    p(|x|^2) e^{-\pi|x|^2}
\end{equation}
with $p$ a polynomial, then a calculation shows
\[
\widehat{f}(t) =  (\T p)(|t|^2) e^{-\pi|t|^2}
\]
for some polynomial $\T p$ depending on $p$.  In other words, $\T$ is
the linear map given by
\begin{equation}\label{Tdef}
(\T p)(|t|^2) = e^{\pi|t|^2}  \int_{\R^n} p(|x|^2) e^{-\pi|x|^2}
e^{-2\pi i \langle x,t \rangle} \, dx,
\end{equation}
which one can check maps polynomials to polynomials.

The functions $f$ used in \cite{CE} are of the form \eqref{pf}.
They are created by requiring that
\[
f(0) = \widehat{f}(0) = 1
\]
and also that $f$ and $\widehat{f}$ must have forced single and
double roots at certain locations.  Together these can be
interpreted as a set of linear conditions satisfied by the
coefficients of the polynomial $p$, which can be solved when the
degree of $p$ is appropriately large compared to the number of
forced roots.   Cohn and Elkies used a computer search to choose
locations for these forced roots in order to optimize the sphere
packing bound obtained from Theorem~\ref{theorem:cohnelkies}.

This procedure works well in practice, but it is difficult
to analyze.  It is not at all obvious that these successively optimized
functions $f$ (coming from polynomials of higher and higher degree) even
converge to a locally optimal choice of $f$, let alone the global optimum.
The numerical evidence is compelling, but a proof is completely lacking.

In this paper, we examine a simpler variant of this approach.
Instead of carefully optimizing the forced root locations, we
specify them \emph{a priori}.  Specifying the roots is worse in
practice, but not much worse: for example, using $200$ roots we will come
within a factor of $1+1.23\times 10^{-27}$ of the Leech lattice's density,
compared with $1+1.65\times10^{-30}$ in \cite{CK3} using $200$ carefully
optimized roots. Because our functions are explicit and do not involve
a computer search, they can be computed more quickly and
may be easier to analyze.

In order to describe where and why we force roots of $f$ and
$\widehat{f}$, it is helpful to recall the proof of
Theorem~\ref{theorem:cohnelkies}.  There we proved the inequality
\[
f(0) \ge \sum_{x  \in  \Lambda}  f(x) = \frac{1}{|\Lambda|}
\sum_{t \in \Lambda^*}  \widehat{f}(t) \ge
\frac{\widehat{f}(0)}{|\Lambda|}
\]
using the conditions that $f(x) \le 0$ for $|x| \ge r$ and
$\widehat{f}(t) \ge 0$ for all $t$.  If the lattice $\Lambda$ is
actually the densest sphere packing in $\R^n$, and if this method
proves a sharp bound, then both inequalities must actually be
equalities. For that to happen, one must first have $|\Lambda|=1$.
(Recall that in the proof, we scaled $\Lambda$ so its minimal vector
length is $r$.) For this scaling of $\Lambda$, the terms $f(x)$ and
$\widehat{f}(t)$ must vanish whenever $x \in \Lambda_{\neq 0}$
and $t \in \Lambda^*_{\neq 0}$. In other words,
\begin{equation}\label{fandfhatmustvanish}
\begin{aligned} & \text{$f$ must vanish at all nonzero vector lengths, and}\\
& \text{$\widehat{f}$ must vanish at all nonzero dual vector
lengths.}
\end{aligned}
\end{equation}
In order to preserve the sign constraints~\eqref{cond2} and~\eqref{cond3}
from Theorem~\ref{theorem:cohnelkies}, the order of vanishing at every vector
length must be even, with the exception of  $f(x)$ at $|x|=r$, where a sign
change should in fact occur.

Note that even if one did not assume that $f$ is radial, it would
still vanish on concentric spheres through the lattice points, not
simply at the individual lattice points.  This is because the above argument
applies not only to $\Lambda$, but to any rotation of it; consequently, $f$ must vanish at each rotated lattice point.

Table~\ref{lengthtable} lists the lengths of nonzero vectors in the
optimal lattices in dimensions $1$, $2$, $8$, and $24$ (scaled so that
$|\Lambda|=1$, which is the usual scaling except in $\R^2$); these
lattices are undoubtedly the densest sphere packings in their
respective dimensions, but of course this has not been proved in $8$ or
$24$ dimensions. For each of these lattices, the dual vector lengths
are the same as the vector lengths: in each case except dimension $2$,
$\Lambda^* = \Lambda$, and in dimension $2$, $\Lambda^*$ is a rotation
of $\Lambda$.

\begin{table}
\caption{Vector lengths in optimal lattices normalized with
$|\Lambda|=1$.} \label{lengthtable}
\begin{center}
\begin{tabular}{ccccc}
\toprule
dimension & \ \ & lattice & \ \ & vector lengths\\    
\midrule
$1$ && $\Z$ && $\{k : k \ge 0\}$\\
$2$ && hexagonal && $\{(4/3)^{1/4}\sqrt{k^2+k\ell+\ell^2} : (k,\ell) \in \Z^2\}$\\
$8$ && $E_8$ && $\{\sqrt{2k} : k \ge 0\}$\\
$24$ && Leech && $\{\sqrt{2k} : k \ge 0, k \ne 1\}$\\
\bottomrule
\end{tabular}
\end{center}
\end{table}

One naive approach to constructing optimal functions would be to force
roots at exactly these locations.  Specifically, let $r_1 < r_2 <
\dots$ be the nonzero vector lengths in the last column of
Table~\ref{lengthtable}. (In other words, $r_1 = \sqrt{2}$ if $n=8$ and
$r_1 =2$ if $n=24$, etc.)  For any integer $k\ge 1$ we define the function
$f_k(x)$ to be of the form \eqref{pf}, with $p(x)=q_k(x)$ a polynomial
of degree $4k-1$, subject to the following $4k$ constraints:
\begin{equation}\label{constraints}
\aligned f_k(0) &    =  1, \\
f_k(x) & \textup{ vanishes to order 1 at }   |x|   =  r_1, \\
f_k(x) & \textup{ vanishes to order 2 at }   |x|   =  r_2,\ldots,r_k, \textup{ and} \\
\widehat{f_k}(x) & \textup{ vanishes to order 2 at } |x| =
r_1,\dots,r_k.
\endaligned
\end{equation}
Such a function is designed to satisfy the requirements of
\eqref{fandfhatmustvanish} and thereby be used in
Theorem~\ref{theorem:cohnelkies}.  However condition~\eqref{cond1} of the
theorem has not been addressed; i.e., we have not forced $\widehat{f_k}(0)=1$
as well.  This condition in fact holds automatically for the limit $f$ of the
functions $f_k$, provided it exists; the reason is that $f$ and $\widehat{f}$
vanish at all non-zero lattice points, and Poisson summation over the lattice
$\Lambda$ implies $\widehat{f}(0)=f(0)=1$.  If one wishes to use the
functions $f_k$ themselves to prove sphere packing bounds, then one must
rescale them to force condition~\eqref{cond1} to hold.  This rescaling
changes the bound to
\[
\frac{\pi^{n/2}}{(n/2)!}\left(\frac{r_1}{2}\right)^n\frac{f_k(0)}{\widehat{f_k}(0)}
\]
(i.e., it introduces a factor of $f_k(0)/\widehat{f}_k(0)$).

Unfortunately, this sequence of functions fails, at first subtly and then dramatically: the functions
do not converge as $k \to \infty$, and for sufficiently large $k$ they do
not even prove packing bounds at all (because they develop unwanted sign changes).
See Section~\ref{sec:num} for a discussion of the numerical evidence.

Instead of using the exact vector lengths in the definition of $f_k$, we
modify them as follows.  Let $\ell_m$ denote the actual $m$-th vector
length.  Given $k$, we define modified root locations $r_1,\dots,r_k$
(depending on $k$) as follows:
\begin{equation} \label{eq:betterrdef}
r_m = \begin{cases}
\ell_m & \textup{if $m < \lfloor 2k/3 \rfloor$, and}\\
\sqrt{\ell_m^2 + \frac{1}{4} \ell_k^2 \left(\frac{m - \lfloor 2k/3 \rfloor}
{k - \lfloor 2k/3 \rfloor}\right)^2} & \textup{if $\lfloor 2k/3 \rfloor \le m \le k$.}
\end{cases}
\end{equation}
In other words, the first two-thirds of the root locations are left
unchanged, while the squares of the others are perturbed by a quadratically
growing amount culminating in making the final one $25\%$ larger.  The
numbers $2/3$ and $1/4$ in \eqref{eq:betterrdef} are somewhat arbitrary, but
these choices appear to work well in practice.  The rescaling \eqref{eq:betterrdef}
was motivated by the empirical location of the roots of the optimized functions of
particular degrees mentioned earlier, as well as the similar spacing of large roots of orthogonal polynomials
(see \cite{deift}).

We can now use these modified root locations to define functions $f_k$.
Unlike the naive definition using $\ell_m$, the improved definition using $r_m$ appears to work well. In
Section~\ref{sec:num} we will examine numerical evidence and make
conjectures, but before that we must resolve one theoretical issue: it is not
obvious that the functions $f_k$ even exist, because the linear equations
defining them may have no solution. In fact, if the forced root locations
$r_1,r_2,\dots$ were chosen differently, then this difficulty could occur.
For example, for $n=1$, $k=2$, $r_1=1$, and $r_2 = 1.3403207576\dots$ (chosen
to satisfy a certain polynomial equation with coefficients in $\Q[\pi]$), the
constraints~\eqref{constraints} defining $f_2$ have no solution. Fortunately,
existence and uniqueness do hold in our cases:

\begin{lemma}\label{unique}
For any algebraic numbers $0 < r_1 <\dots < r_k$, there exists a unique
polynomial $q_k$ of degree $4k-1$ such that the
constraints~\eqref{constraints} hold for $f_k(x) =
q_k(|x|^2)e^{-\pi|x|^2}$.
\end{lemma}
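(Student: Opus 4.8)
The plan is to turn the $4k$ conditions in~\eqref{constraints} into a square linear system for the coefficients of $q_k$ and to prove that its matrix is invertible. Write $q=\sum_{i=0}^{4k-1}a_i t^i$. Since $e^{-\pi\rho^2}$ never vanishes and $\rho\mapsto\rho^2$ is a local diffeomorphism near each $\rho=r_j>0$, the condition that $f_k$ vanish to a prescribed order at $|x|=r_j$ is equivalent to the vanishing of $q$ together with the appropriate derivatives at $t=r_j^2$; likewise the conditions on $\widehat{f_k}$ translate into the vanishing of $\T q$ and $(\T q)'$ at the points $r_j^2$, and $f_k(0)=1$ becomes $q(0)=1$. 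So~\eqref{constraints} is a system $A(\pi)\,\mathbf a=(1,0,\dots,0)^{\mathsf T}$ with $A(\pi)$ a $4k\times 4k$ matrix whose rows record, respectively, evaluation of $q$ at $0$, evaluation of $q$ at $r_1^2$, evaluation of $q$ and $q'$ at $r_2^2,\dots,r_k^2$, and evaluation of $\T q$ and $(\T q)'$ at $r_1^2,\dots,r_k^2$. Existence and uniqueness of $q_k$ are exactly the statement $\det A(\pi)\neq 0$, and (by Cramer's rule) $\deg q_k=4k-1$ is exactly the non-vanishing of the determinant obtained from $A(\pi)$ by replacing the column multiplying $a_{4k-1}$ with $(1,0,\dots,0)^{\mathsf T}$. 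I will treat $\det A(\pi)$; the other determinant is handled identically.

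The crucial observation is arithmetic. From~\eqref{Tdef}, or from the standard fact that the functions $L_j^{(n/2-1)}(2\pi|x|^2)e^{-\pi|x|^2}$ are Fourier eigenfunctions with eigenvalue $(-1)^j$, one sees that $\T(t^d)=(-1)^d t^d+\sum_{i<d}c_{d,i}t^i$, where each $c_{d,i}$ is a rational multiple of $\pi^{i-d}$. Hence every entry of $A(\pi)$ is a $\overline{\Q}$-linear combination of non-positive powers of $\pi$ (its coefficients being polynomials in the algebraic numbers $r_j^2$), so for a formal variable $X$ the quantity $\det A(X)$ is a Laurent polynomial in $X$ with algebraic coefficients. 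Since $\pi$ is transcendental, $\det A(\pi)=0$ would force $\det A(X)$ to be the zero Laurent polynomial; it therefore suffices to exhibit a single value of $X$ at which the determinant does not vanish. (The algebraicity of the $r_j$ is genuinely used here, and cannot be dropped: as the example in the text shows, a transcendental choice of $r_2$ can be made so that $\det A(X)$ vanishes precisely at $X=\pi$.)

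I take the specialization $X\to\infty$. Every $\pi$-dependent entry of $A(X)$ involves only negative powers of $X$, so $A(X)$ tends entrywise to the matrix $A_\infty$ of the same system with $\T$ replaced by the operator $\T_\infty$ defined by $(\T_\infty p)(t)=p(-t)$ — note $\T_\infty(t^d)=(-1)^d t^d$ — and hence $\det A(X)\to\det A_\infty$. Thus it is enough to show $A_\infty$ is invertible. If $q$ has degree $\le 4k-1$ and lies in the kernel of the homogeneous $A_\infty$-system, then $q(0)=0$, $q$ vanishes to order $\ge 1$ at $r_1^2$ and to order $\ge 2$ at $r_2^2,\dots,r_k^2$, and — because $(\T_\infty q)(t)=q(-t)$ — $q$ vanishes to order $\ge 2$ at each of $-r_1^2,\dots,-r_k^2$. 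The $2k+1$ numbers $0,\pm r_1^2,\dots,\pm r_k^2$ are distinct (here $0<r_1<\dots<r_k$), so $q$ has at least $1+1+2(k-1)+2k=4k$ roots counted with multiplicity; since $\deg q\le 4k-1$, we conclude $q=0$. Therefore $\det A_\infty\neq 0$, so $\det A(X)$ is not identically zero, so $\det A(\pi)\neq 0$, and $q_k$ exists and is unique. Running the same argument on the numerator determinant for $a_{4k-1}$ — whose $X\to\infty$ value is the (nonzero) leading coefficient of the unique solution $c\,(t-r_1^2)\prod_{j=2}^{k}(t-r_j^2)^2\prod_{j=1}^{k}(t+r_j^2)^2$ of the limiting inhomogeneous system — shows that $\deg q_k=4k-1$ as claimed.

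The main obstacle is conceptual, not computational. At the true value $\pi$, the crude bound obtained by counting the known zeros of $q$ and of $\T q$ separately is too weak to force $q=0$; one has to degenerate $\T$ to the operator $p(t)\mapsto p(-t)$, for which the zero count just barely closes ($4k>4k-1$), and then invoke the transcendence of $\pi$ — together with the hypothesis that the $r_j$ are algebraic — to bring the conclusion back to $\pi$ itself.
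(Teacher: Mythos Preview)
Your proof is correct and takes essentially the same approach as the paper: both exploit the transcendence of $\pi$ (and the algebraicity of the $r_j$) to reduce to the leading behavior of the determinant, which amounts to degenerating $\T$ to the reflection $p(t)\mapsto p(-t)$ and then recognizing the resulting system as a nonsingular Hermite interpolation at the distinct nodes $0,\,r_1^2,\,r_2^2,\dots,r_k^2,\,-r_1^2,\dots,-r_k^2$. The only substantive differences are cosmetic---you work in the monomial basis and phrase the degeneration as $X\to\infty$, whereas the paper works in the Laguerre basis and speaks of the ``highest power of $\pi$''---and you additionally verify via Cramer's rule that $\deg q_k$ is exactly $4k-1$, a point the paper's proof leaves implicit.
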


For the proof of this lemma, we will need to diagonalize the
transform $\T$ defined in \eqref{Tdef}.  Define $p_j(x)=L_j^{n/2-1}(2\pi x)$, where $L_j^\alpha$ is
the Laguerre polynomial of degree $j$ and index $\alpha=n/2-1$. Recall
that the polynomials $L^\alpha_j$ are orthogonal polynomials with
respect to the measure $x^{-\alpha}e^{-x} \, dx$ on $[0,\infty)$, which
 can be written as
\[
L^\alpha_j(x)   =
\frac{x^{-\alpha}e^x}{j!}\frac{d^j}{dx^j}(x^{\alpha+j}e^{-x}).
\]
The product $p_j( |x|^2)e^{-\pi |x|^2}$ is a radial eigenfunction
of the Fourier transform \eqref{fhat} with eigenvalue $(-1)^j$. In
other words,
\[
\T p_j =(-1)^jp_j.
\]
Writing an arbitrary polynomial as a linear combination of the
polynomials $p_j$ makes it easy to apply $\T$.

\begin{proof}
Write the polynomial $q_k$ as a linear combination
\[
q_k = \sum_{j=0}^{4k-1} c_j p_j.
\]
The constraints~\eqref{constraints} amount to the following linear
equations in $c_0,\dots,c_{4k-1}$:
\begin{equation}\label{constreqn}
\begin{split}
\sum_{j=0}^{4k-1} c_j p_j(0) &= 1 \\
\sum_{j=0}^{4k-1} c_j p_j(2\pi r_m^2) &= 0  \qquad\textup{for $1 \le m \le k$}\\
\sum_{j=0}^{4k-1} c_j p_j'(2\pi r_m^2) &= 0  \qquad\textup{for $2 \le m \le k$}\\
\sum_{j=0}^{4k-1} (-1)^j c_j p_j(2\pi r_m^2) &= 0  \qquad\textup{for $1 \le m \le k$}\\
\sum_{j=0}^{4k-1} (-1)^j c_j p_j'(2\pi r_m^2) &= 0
\qquad\textup{for $1 \le m \le k$.}
\end{split}
\end{equation}
To prove the lemma, we need only show that the determinant of the $4k \times 4k$
matrix of coefficients is nonzero.  View the coefficients as
polynomials in $\pi$ (recall that $p_j(x)=L_j^{n/2-1}(2\pi x)$, where
$L_j^{n/2-1}$ has coefficients in $\Q$, and that the forced root locations
$r_1,\dots,r_k$ are algebraic).  We will use the transcendence of $\pi$ to prove that
the determinant is nonzero, by identifying its leading coefficient as
a polynomial in $\pi$ and showing that it does not vanish.

Each column of the matrix corresponds to $p_j$ for some $j$, with entries of the
form $p_j(0)$, $p_j(2\pi r_m^2)$, $p_j'(2\pi r_m^2)$, $(-1)^j p_j(2\pi r_m^2)$, and
$(-1)^j p_j'(2\pi r_m^2)$ for suitable values of $m$.
If we write $p_j$ as a linear combination of monomials, then we can expand the
determinant as a corresponding linear combination, with the highest power of $\pi$ coming from
the monomial $x^j$ of highest degree.  Thus, if we can show that the determinant is nonzero
after replacing $p_j(x)$ with $x^j$ for all $j$, then it must have been nonzero
to start with.

This replacement dramatically simplifies the equations, because we can reinterpret
them as describing a more tractable interpolation problem.  The new equations
ask for the coefficients of a polynomial of degree $4k-1$ with the
following constraints.  Its value at $0$ is specified, its value at
$2\pi r_1^2$ is specified, its values and first derivatives at $2\pi
r_2^2,\dots,2\pi r_k^2$ are specified, and its values and first derivatives at
$-2\pi r_1^2,\dots, -2\pi r_k^2$ are specified.  For the negative cases,
note that replacing $p_j(x)$
with $x^j$ transforms $(-1)^j p_j(x)$ into $(-x)^j$.  This interpolation problem is a
special case of Hermite interpolation, and the determinant of the
coefficient matrix is therefore nonzero.  (See Subsection~2.1 of \cite{CK2}
for a review of Hermite interpolation.)

It follows that the coefficient matrix of the original equations
also has a nonzero determinant, so there exists a unique solution.
\end{proof}

\section{Numerical evidence}
\label{sec:num}

In this section  we will examine the numerical evidence for convergence. Our
calculations are based on floating-point arithmetic, with no rigorous bounds
on the rounding error, but we believe all reported digits are correct. (We believe that these calculations could be made rigorous if necessary, for example by using interval arithmetic
or the techniques from Appendix~A in \cite{CK3}.) When using $k$ forced root locations, we carried out all
computations to $8k+75$ digits of precision using PARI/GP.  Experimentation suggests that
$8k+75$ digits is far more precision than is actually needed, but it is
easier to pick an unnecessarily high bound than to calibrate how little
precision we could get away with.

First, consider the naive approach discussed in the previous section, in
which one takes the forced root locations $r_1,\dots,r_k$ to be the first $k$
nonzero vector lengths in the optimal lattice.  Though at first this approach gives good bounds, it subtly reverses course and eventually   fails
completely for large $n$ (see Table~\ref{table:naive}). In the $\R^8$ case, the bound improves as $k$ grows until $k=40$, at
which point it is slightly better than the bound proved in \cite{CE} (and
much better than the previous record bound of
$\approx 1.012$). However, after $k=40$ the bound steadily gets worse. By $k=130$,
the bound would be less than $1$, which is impossible and indicates that the
function must have developed an unwanted sign change by that point. In the
$\R^{24}$ case, the problems are even more dramatic.

\begin{table}
\caption{Supposed upper bounds for packing density using the exact vector
lengths as forced root locations, without checking for unwanted sign changes.
Bounds are expressed as a multiple of the density of the optimal lattice.}
\label{table:naive}
\centering
\begin{tabular}{ccc}
\toprule
$k$ & naive packing bound in $\R^{8}$ & naive packing bound in $\R^{24}$\\
\midrule
$10$ & $1.0001507518\dots$ &  $\phantom{-}1.3706005433\dots$\\
$20$ & $1.0000052091\dots$ &  $\phantom{-}1.1082380574\dots$\\
$30$ & $1.0000013138\dots$ &  $\phantom{-}1.1109658270\dots$\\
$40$ & $1.0000009656\dots$ &  $\phantom{-}1.2417952436\dots$\\
$50$ & $1.0000014330\dots$ &  $\phantom{-}2.1249579472\dots$\\
$60$ & $1.0000035296\dots$ &  ${-}3.7219923464\dots$\\
$70$ & $1.0000128440\dots$ & \\
$80$ & $1.0000634933\dots$ & \\
$90$ & $1.0004126231\dots$ & \\
$100$ & $1.0031219206\dots$ & \\
$110$ & $1.0256918168\dots$ & \\
$120$ & $1.5572034878\dots$ & \\
$130$ & $0.9163797290\dots$ & \\
\bottomrule
\end{tabular}
\end{table}

This failure demonstrates the difficulty of making predictions based on
limited numerical data.  If one looked at only the data for $k \le 40$ and
$n=8$, one might reasonably conjecture that the bound was converging to $1$
(although a sophisticated analysis would indicate that the convergence was
happening uncomfortably slowly as $k$ neared $40$).

This effect is reminiscent of Runge's phenomenon from interpolation theory (see \cite{Ep}).
Although the problem is not literally overconstrained, forcing too many roots
at the limiting locations constrains the function so much that it develops
undesired oscillations to compensate.  Pushing the larger roots towards
infinity seemingly relaxes the constraints, dampens the oscillations, and
allows convergence.

\begin{table}
\caption{Upper bounds for packing density using the modified vector
lengths as forced root locations.  Bounds are expressed as a multiple
of the density of the optimal lattice.  Note the contrast with Table~\ref{table:naive}.} \label{table:modified}
\centering
\begin{tabular}{ccccc}
\toprule
$k$ & \ \ & packing bound in $\R^{8}$ & \ \ & packing bound in $\R^{24}$\\
\midrule
$25$  && $1+2.013636284513588\ldots\times10^{-10}$ && $1+1.276838479911905\ldots\times10^{-6\phantom{0}}$\\
$50$  && $1+5.356893094673532\ldots\times10^{-16}$ && $1+4.112485306793651\ldots\times10^{-11}$\\
$75$  && $1+2.843270958834257\ldots\times10^{-20}$ && $1+1.034793038360603\ldots\times10^{-14}$\\
\midrule
$100$ && $1+6.131875484794015\ldots\times10^{-24}$ && $1+6.036832814830833\ldots\times10^{-18}$\\
$200$ && $1+7.957229644125821\ldots\times10^{-35}$ && $1+1.224810072437178\ldots\times10^{-27}$\\
$300$ && $1+8.043925729944741\ldots\times10^{-43}$ && $1+6.139675825632854\ldots\times10^{-35}$\\
$400$ && $1+1.554622153413999\ldots\times10^{-49}$ && $1+3.603565234648839\ldots\times10^{-41}$\\
$500$ && $1+4.477920519243749\ldots\times10^{-55}$ && $1+2.511348284489217\ldots\times10^{-46}$\\
$600$ && $1+6.319153710652842\ldots\times10^{-60}$ && $1+7.276989083620164\ldots\times10^{-51}$\\
\bottomrule
\end{tabular}
\end{table}

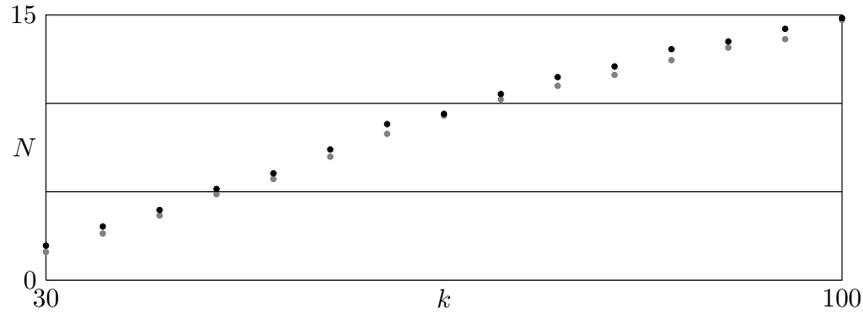
\begin{figure}
\centering
\begin{tikzpicture}[scale=0.0352777777]
\draw (0,33.3333) -- (300,33.3333);
\draw (0,66.6667) -- (300,66.6667);
\draw (0,0) -- (300,0) -- (300,100) -- (0,100) -- (0,0);
\draw (0,0) node[left] {$0$};
\draw (0,50) node[left] {$N$};
\draw (0,100) node[left] {$15$};
\draw (0,0) node[below] {$30$};
\draw (150,0) node[below] {$k$};
\draw (300,0) node[below] {$100$};
\foreach \x/\a/\b in
{0/10.655229/13.036089, 21.428571/17.613203/20.248447, 42.857143/24.378345/26.461763,
64.285714/32.413545/34.404995, 85.714286/38.146460/40.284130, 107.14286/46.554480/49.274673,
128.57143/55.190897/58.866695, 150.00000/62.066051/62.725017, 171.42857/68.164020/70.190967,
192.85714/73.303053/76.590207, 214.28571/77.411560/80.605387, 235.71429/82.968007/87.118780,
257.14286/87.692867/90.014060, 278.57143/90.893707/94.806940, 300.00000/98.101593/98.847573}
{ \draw[fill=gray, color=gray] (\x,\a) circle (1);
\draw[fill=black] (\x,\b) circle (1); }
\end{tikzpicture}
\caption{Number $N$ of digits to which the values of $f_k$ and
$\widehat{f}_k$ agree with those of $f_{k-5}$ and
$\widehat{f}_{k-5}$ at all of the points $x/10 + (y/10)i$, for integers $0 \le x \le 50$
and $0 \le y \le 2$.  Data points
for $n=8$ are gray and those for $n=24$ are black.}
\label{imagplotfigure}
\end{figure}

\begin{table}
\caption{Values of $f_k(i/2)$ and $\widehat{f}_k(i/2)$.}
\label{table:valuesonimagaxis}
\centering
\begin{tabular}{ccccc}
\toprule
$n$ & $k$ & $f_k(i/2)$ & $\widehat{f}_k(i/2)$\\
\midrule
$8$ & $100$ & $0.939432541969057457603843\dots$ & $0.526774741363446491086599\dots$\\
$8$ & $200$ & $0.939432541959478373173290\dots$ & $0.526774741373025575517211\dots$\\
$8$ & $300$ & $0.939432541959477686529550\dots$ & $0.526774741373026262160950\dots$\\
$8$ & $400$ & $0.939432541959477685343726\dots$ & $0.526774741373026263346775\dots$\\
$8$ & $500$ & $0.939432541959477685338723\dots$ & $0.526774741373026263351777\dots$\\
$8$ & $600$ & $0.939432541959477685338602\dots$ & $0.526774741373026263351898\dots$\\
\midrule
$24$ & $100$ & $0.909504018094605062955468\dots$ & $0.543934528596990605074180\dots$\\
$24$ & $200$ & $0.909504017149389039571803\dots$ & $0.543934529542206632888889\dots$\\
$24$ & $300$ & $0.909504017149302144551677\dots$ & $0.543934529542293527909015\dots$\\
$24$ & $400$ & $0.909504017149301977449339\dots$ & $0.543934529542293695011353\dots$\\
$24$ & $500$ & $0.909504017149301976704937\dots$ & $0.543934529542293695755754\dots$\\
$24$ & $600$ & $0.909504017149301976686050\dots$ & $0.543934529542293695774641\dots$\\
\bottomrule
\end{tabular}
\end{table}

We have been unable to analyze the asymptotic behavior of the functions $f_k$
defined using the roots \eqref{eq:betterrdef}, but they lead to excellent bounds (see
Table~\ref{table:modified}) and   appear to converge rapidly. In what
follows, we refer to $f_k$ and $\widehat{f_k}$, as well as their hypothetical
limits as $k \to \infty$, as analytic functions of a radial variable.

\begin{conjecture}
\label{conj:doubleconv} As $k \to \infty$, $f_k$ converges to a
function $f$ and $\widehat{f}_k$ converges to $\widehat{f}$, on
some neighborhood of the real line in $\C$. The convergence is
uniform on compact subsets of this neighborhood.
\end{conjecture}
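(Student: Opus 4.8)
Although we cannot prove Conjecture~\ref{conj:doubleconv}, the following appears to be the most promising line of attack. The plan is to obtain an explicit closed form for $q_k$ and then analyze its behavior as $k\to\infty$ by steepest descent together with potential theory, in the spirit of the asymptotic analysis of orthogonal polynomials alluded to via \cite{deift}. Starting from the Laguerre expansion $q_k=\sum_{j=0}^{4k-1}c_jp_j$ of the proof of Lemma~\ref{unique}, for which $\T q_k=\sum_j(-1)^jc_jp_j$, Cramer's rule applied to the linear system \eqref{constreqn} expresses the coefficients $c_j$ --- and hence, through these two sums, both $f_k$ and $\widehat{f}_k$ --- as explicit ratios of determinants (of sizes $(4k+1)\times(4k+1)$ and $4k\times 4k$) in the data $p_j(0)$, $p_j(\pm 2\pi r_m^2)$, and $p_j'(\pm 2\pi r_m^2)$; the sign pattern $(-1)^j$ is exactly what makes the effective interpolation nodes occur in $\pm$ pairs, as in the reduction to Hermite interpolation on a symmetric node set already used in the proof of Lemma~\ref{unique}. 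Using the Laguerre generating function $\sum_{j\ge 0}L_j^\alpha(y)\,z^j=(1-z)^{-\alpha-1}\exp\!\big(\tfrac{yz}{z-1}\big)$, one can then rewrite the Laguerre sums for $q_k$ and $\T q_k$ as a single contour integral in an auxiliary variable $z$, whose integrand is an explicit meromorphic function built from a polynomial encoding the nodes $2\pi r_m^2$ and from the Gaussian factor.

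The crux is to pass to the limit. As $k\to\infty$ the effective nodes $2\pi r_m^2$, suitably rescaled, should acquire a limiting density: for $m<\lfloor 2k/3\rfloor$ one has $r_m=\ell_m$ with $\ell_m^2\in 2\Z$, so these fill out an interval with a density read off from the lattice-length counting function, while \eqref{eq:betterrdef} stretches the remaining third into a quadratically growing tail. We would try to identify the equilibrium measure associated with this node measure together with the external field coming from the Gaussian, show that its logarithmic potential controls the exponential size of the $c_j$, and then locate a non-degenerate saddle point of the contour integrand. Away from the nodes, a single simple saddle should yield an expansion $f_k(x)=f(x)+O\big(\rho(x)^k\big)$ with $\rho(x)<1$, uniformly for $x$ in a fixed complex neighborhood of $\R$, and likewise for $\widehat{f}_k$; this is precisely the assertion of the conjecture, and the limit $f$ so obtained would automatically vanish to the prescribed orders at each $\ell_m$ and have the Gaussian-times-analytic shape needed in the later sections.

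The principal obstacle --- the reason we have not been able to carry this out --- is that this is neither an orthogonal-polynomial problem nor a standard Hermite interpolation problem: the two halves of \eqref{constreqn} couple $f_k$ to $\widehat{f}_k$ through the Fourier eigenvalue sign, so the natural framework is a Fourier interpolation problem, for which no counterpart of the Riemann--Hilbert machinery is presently available, and the ad hoc recipe \eqref{eq:betterrdef} has no evident variational characterization, so it is not clear a priori that it corresponds to a clean equilibrium problem at all. The delicate point is the perturbed tail: one must prove that stretching the top third of the nodes genuinely suppresses the Runge-type oscillations visible in Table~\ref{table:naive} rather than merely postponing them --- equivalently, that the saddle point stays simple and a definite distance from the moving node set for all large $k$. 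We expect essentially all the difficulty to reside in this uniform separation estimate; the rest is bookkeeping around a steepest-descent calculation. Failing a full asymptotic analysis, a softer route would be to establish only that $\{f_k\}$ is uniformly bounded and equicontinuous on compact subsets of a complex neighborhood of $\R$ --- on $\R^n$ itself $|f_k|\le 1$ as soon as $\widehat{f}_k\ge 0$, since then $f_k$ is the Fourier transform of a nonnegative function of total mass $f_k(0)=1$ --- then invoke Montel's theorem to extract locally uniform subsequential limits, and finally show that any such limit is pinned down by a characterizing interpolation (or Poisson summation) property and so equals the single function $f$; but establishing that uniqueness, and the positivity $\widehat{f}_k\ge 0$ on which the boundedness rests, is itself nontrivial.
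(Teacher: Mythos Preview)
The statement you are addressing is a \emph{conjecture} in the paper, not a theorem: the paper offers no proof whatsoever of Conjecture~\ref{conj:doubleconv}, only numerical evidence (Table~\ref{table:modified}, Figure~\ref{imagplotfigure}, Table~\ref{table:valuesonimagaxis}) together with the explicit disclaimer that the authors ``have been unable to analyze the asymptotic behavior of the functions $f_k$.'' So there is no ``paper's own proof'' to compare against.

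Your submission is, appropriately, not a proof either --- you say so in your first sentence --- but rather a research program. As such it is reasonable and in fact anticipates the paper's own hints: the authors motivate the recipe \eqref{eq:betterrdef} by analogy with the spacing of large roots of orthogonal polynomials and cite \cite{deift}, which is exactly the Riemann--Hilbert/steepest-descent framework you propose to adapt. Your identification of the key obstacles is also accurate: the coupling of $f_k$ and $\widehat{f}_k$ through the Fourier eigenvalue signs takes the problem outside standard orthogonal-polynomial or Hermite-interpolation asymptotics, and the ad hoc constants $2/3$ and $1/4$ in \eqref{eq:betterrdef} have no variational origin, so there is no obvious equilibrium measure to work with. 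The paper concurs on the difficulty, noting that even for the numerically optimized functions of \cite{CE} ``it is not at all obvious that these successively optimized functions $f_k$ \dots\ even converge to a locally optimal choice of $f$, let alone the global optimum.''

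One small point on your softer route: the bound $|f_k|\le 1$ on $\R^n$ does follow from $\widehat{f}_k\ge 0$ and $f_k(0)=1$ as you say, but the paper's own data (Table~\ref{table:rootdistance} and the surrounding discussion) suggest that $\widehat{f}_k$ may in fact develop unwanted sign changes for $k$ around $1400$, so you cannot assume this positivity for all $k$; at best you would need it along a subsequence, and the paper does not guarantee even that.
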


The evidence for uniform convergence is of course not as strong as that for convergence,
but it implies that $f$ and $\widehat{f}$ are analytic and thus rapidly
decreasing (because their Fourier transforms are analytic and hence smooth).
It follows that they are admissible.

As evidence for Conjecture~\ref{conj:doubleconv}, we offer
Figure~\ref{imagplotfigure}, which demonstrates steady convergence as $k$
increases from $30$ to $100$, at a selection of sample points with real parts
up to $5$ and imaginary parts up to $0.2$.  In fact, convergence seems to
hold even for somewhat larger imaginary parts; for example,
Table~\ref{table:valuesonimagaxis} shows the values at $i/2$.  However,
convergence does not occur when the imaginary part is $1$ or more; for
example, for $n=8$ we have
\[
f_{500}(i) = 1786219116279967.87\ldots
\]
while
\[
f_{600}(i) = 474994401497433517.69\ldots.
\]

\begin{conjecture}
\label{conj:roots}
The limiting functions $f$ and $\widehat{f}$ from Conjecture~\ref{conj:doubleconv}
have no real roots other than the forced roots.
\end{conjecture}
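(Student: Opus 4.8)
The plan is to combine Conjecture~\ref{conj:doubleconv} with quantitative control on the non-forced roots of the polynomials $q_k$ and $\T q_k$. By Conjecture~\ref{conj:doubleconv}, $f_k\to f$ and $\widehat{f_k}\to\widehat{f}$ uniformly on compact subsets of a complex neighborhood of $\R$, so by Hurwitz's theorem the roots of $f$ in a fixed interval $[0,R]$, counted with multiplicity, are exactly the limits of roots of $f_k$ in a complex neighborhood of $[0,R]$. Since the $m$-th forced location in \eqref{eq:betterrdef} equals the true vector length $\ell_m$ once $m<\lfloor 2k/3\rfloor$, for each fixed $m$ the function $f_k$ vanishes at $\ell_m$ to order $1$ (for $m=1$) or $2$ (for $m\ge2$) for all large $k$, so the limit $f$ does as well, and symmetrically $\widehat{f}$ vanishes to order $2$ at each $\ell_m$. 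Hence the entire content of the statement is that no \emph{other} root of $f_k$ or $\widehat{f_k}$ near the real axis survives the limit at a finite point.

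To control this I would bookkeep the non-forced roots. By Lemma~\ref{unique}, $q_k$ has degree $4k-1$, of which $2k-1$ roots (with multiplicity) are forced by \eqref{constraints}, leaving $2k$ non-forced ones; since $\T$ preserves degree (because $\T p_j=(-1)^jp_j$), the polynomial $\T q_k$ likewise has degree $4k-1$, with $2k$ forced roots and $2k-1$ non-forced ones. Two facts should then locate all of them. First, one should establish --- separately, and for all large $k$ --- that $f_k$ and $\widehat{f_k}$ themselves satisfy the sign conditions of Theorem~\ref{theorem:cohnelkies}; this is what legitimizes the bounds in Table~\ref{table:modified}, and it forces every non-forced real root of $f_k$ in $(r_1,\infty)$, and every non-forced real root of $\widehat{f_k}$ in $[0,\infty)$, to have even order, since a sign change would violate $f_k\le 0$ or $\widehat{f_k}\ge 0$. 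Second, one should show these even-order non-forced real roots all lie in $(r_k,\infty)$, where $r_k=\tfrac{\sqrt5}{2}\,\ell_k\to\infty$ by \eqref{eq:betterrdef} (allowing possibly a single non-forced root of $\T q_k$ on the negative axis, which is irrelevant to real values of the transform). Granting both facts, for any fixed $R$ and all large $k$ the only roots of $f_k$ and $\widehat{f_k}$ in a neighborhood of $[0,R]$ are the forced ones; Hurwitz's theorem then transfers this to $f$ and $\widehat{f}$, and letting $R\to\infty$ completes the proof.

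The main obstacle is the second fact: showing, uniformly in $k$, that the non-forced roots of $q_k$ and $\T q_k$ escape beyond $r_k$ rather than crowding into a fixed compact interval. Heuristically this is the reverse of the Runge phenomenon --- the perturbation \eqref{eq:betterrdef} was chosen precisely so that the unavoidable extra oscillations are pushed toward infinity --- and the natural rigorous route is a Sturm-type or interlacing comparison of $q_k$ with the Laguerre polynomial $L_{4k-1}^{n/2-1}(2\pi x)$, whose largest zeros escape at the rate $\sim\ell_k$ (cf.\ \cite{deift}): one would argue that $q_k$ has no non-forced roots on $(0,r_1)$, that its sign pattern on $(r_1,r_k)$ is dictated entirely by the prescribed roots, and that on $(r_k,\infty)$ it carries only a short string of roots mimicking the top of the Laguerre spectrum. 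I do not see how to make this comparison rigorous, which is why the statement is only conjectured; a further difficulty is that establishing the sign conditions for the $f_k$ in the first place appears to be of comparable depth. For $n=8$ one might instead hope to identify the limit $f$ with Viazovska's explicit function from \cite{V}, whose real roots are known exactly, but that would require a uniqueness theorem for optimal functions beyond what is established here.
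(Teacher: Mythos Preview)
The statement is a conjecture, and the paper does not prove it either; it offers only numerical evidence and a brief discussion of obstacles. So there is no ``paper's own proof'' to compare against, and your proposal is appropriately framed as a strategy rather than a proof.

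Your outline via Hurwitz's theorem is the natural approach, and the paper gestures in the same direction when it remarks that Conjecture~\ref{conj:roots} would follow from Conjecture~\ref{conj:doubleconv} together with the extra hypothesis that the complex roots stay bounded away from the real axis. However, your two ``facts'' only locate the non-forced \emph{real} roots of $f_k$ and $\widehat{f_k}$; Hurwitz requires control of all roots, real or complex, in a complex neighborhood of $[0,R]$. The paper's own data (Table~\ref{table:rootdistance} and Figures~\ref{complexrootsfigure}--\ref{complexrootsfigure2}) shows that the complex roots of $f_k$ and $\widehat{f_k}$ are steadily approaching the real axis as $k$ grows, with minimal imaginary part shrinking and possibly reaching zero around $k\approx 1400$. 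The paper explicitly flags this as the main uncertainty: ``it is unclear whether this stronger hypothesis is true.'' So even if your two facts held, the argument would still be incomplete, and this obstacle---complex roots drifting toward the real axis far from the origin---is arguably more serious than the interlacing problem you identify as the main difficulty.

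A secondary point: the paper's evidence is actually stronger than your fact~(ii) suggests. Except for $\widehat{f_2}$ when $n=24$, the paper reports finding \emph{no} non-forced real roots at all for any $k$ tested, not merely that such roots lie beyond $r_k$. So the relevant question is not where the extra real roots sit, but whether nearby complex roots can coalesce onto the real axis in the limit.
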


When $n=24$, $\widehat{f_2}$ has another real root, but we have found no
other case in which $f_k$ or $\widehat{f_k}$ has any non-forced real roots.
If Conjecture~\ref{conj:doubleconv} holds and there is a neighborhood of the real
line into which the complex roots never intrude, then that is enough to imply
Conjecture~\ref{conj:roots}.  However, it is unclear whether this stronger
hypothesis is true.
As one can see from the data in Table~\ref{table:rootdistance}, the complex roots are growing steadily
closer to the real axis, and they might reach it around $k=1400$.  Even if
they eventually reach the axis, we conjecture that any unwanted
sign changes will occur far from the origin and will disappear in the limit
as $k \to \infty$.  It is plausible that one could remove them entirely by modifying
\eqref{eq:betterrdef}.

The complex root locations have several mysterious properties.  See
Figure~\ref{complexrootsfigure} for plots with $k=600$ and $n=8$, and
Figure~\ref{complexrootsfigure2} for plots with $n=24$ (which are very
similar to the $n=8$ case). The roots lie on several clear curves, and they
are most likely accumulating on the boundary of the domain of holomorphy.
Note that their nearest approach to the real axis is quite far from the
origin, as we asserted above.

One surprising observation is that $f_k$ and $\widehat{f}_k$ have nearly the
same roots away from the origin.  In the third part of these two figures, we
show the roots of one of $f_k$ or $\widehat{f}_k$ that do not agree to six
decimal places with any root of the other.  Only the roots relatively near
the origin appear in these plots.  See also Table~\ref{table:rootdistance},
in which the $f_k$ and $\widehat{f}_k$ columns become nearly identical as $k$
grows.

For comparison, Table~\ref{table:roots} shows the nearest roots to the
origin. In each case, $\widehat{f}_k$ has a purely imaginary root that is
probably converging as $k \to \infty$ (the numbers show clear convergence
when $n=8$ and possible convergence when $n=24$). The other roots are roughly
paired up for $f_k$ and $\widehat{f}_k$, but these pairs are not nearly as
close to each other as those further from the origin. We see no reason to
think any of the non-real roots are converging except for the purely
imaginary roots.

It is clear from this data that the roots have considerable structure, which
we are unable to explain conceptually.  More data could help, but
calculations for large $k$ are very time consuming.  We have computed $f_k$
and $\widehat{f}_k$ for $k=700$, $800$, and $900$, but we have not located
their roots.  If they have no unexpected sign changes, then with $k=900$ we
get sphere packing bounds within a factor of $1+5.33 \times 10^{-72}$ of the
density of $E_8$ or $1+3.04\times 10^{-62}$ of that of the Leech lattice.  We
expect that these bounds are true and could be proved given enough computing
power, but the evidence is not as conclusive as it is in the cases for which
we have located the roots.

\begin{table}
\caption{The minimal distance between the complex roots of $f_k$ or
$\widehat{f}_k$ and the real axis.} \label{table:rootdistance}
\centering

\end{table}

It follows from Conjecture~\ref{conj:roots} that $f$ and
$\widehat{f}$ have no unexpected sign changes.  Thus,
Conjectures~\ref{conj:doubleconv} and~\ref{conj:roots} for $n=8$
or $24$ would solve the sphere packing problem in $\R^n$.

It is interesting to note that the parameter $n$ in these
conjectures can be varied, while leaving the forced root
locations fixed. Of course there is no connection with sphere
packing for general $n$ (it does not even have to be an
integer). If a limiting $f$ exists, it also does not follow in
general that $f(0)=\widehat{f}(0)$, since that requires Poisson
summation over an appropriate lattice.  However, the analogues
of Conjectures~\ref{conj:doubleconv} and~\ref{conj:roots} do
seem to hold in all small dimensions (although we have not
investigated them as carefully as the $n=8$ and $n=24$ cases).  In particular, we
conjecture that if $f_k$ is defined with forced roots based on
the $E_8$ vector lengths, then these conjectures hold for $0 < n < 10$ (for $n=10$ there in fact appear to be extraneous
real roots). This flexibility is encouraging, because it
suggests that a proof need not depend on specific facts about
$\R^8$, but rather could hold for much more general reasons.
Similarly, for the Leech lattice vector lengths the conjectures
seem to hold for $0 < n < 26$.  More generally,
many of the phenomena we study in this paper are
not restricted to $n=8$ and $n=24$.  For example, we make the following
conjecture:

\begin{conjecture}
For $0 < n < 10$, forcing roots at the $E_8$ vector lengths yields a limiting
function $f$ satisfying
\[
\frac{f(0)}{\widehat{f}(0)} = -\frac{n^4-56n^3+1184n^2-11200n+40320}{16(n-10)(n-14)(n-18)}.
\]
For $0 < n < 26$, using the Leech lattice vector lengths yields instead
\[
\frac{f(0)}{\widehat{f}(0)} = -\frac{p_{24}(n)}{32(n-26)(n-34)(n-38)(n-42)(n^3
-116n^2+4480n-57024)},
\]
where
\begin{align*}
p_{24}(n) &= n^8-284n^7+35312n^6-2510720n^5+111652352n^4-3180064256n^3\\
& \quad \phantom{} + 56651266048n^2-577142292480n+2574499479552.
\end{align*}
\end{conjecture}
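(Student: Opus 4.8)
The plan is to grant the existence of the limiting function $f$ --- the general-$n$ analogue of Conjecture~\ref{conj:doubleconv}, so that $f$ is an admissible radial function on $\R^n$ with $f(0)=1$ --- and to extract $f(0)/\widehat f(0)$ from the fact that the vanishing constraints force $f$ to be essentially unique. First I would record what the limit inherits from the constraints~\eqref{constraints}: for each fixed $m$ the modified root $r_m$ of~\eqref{eq:betterrdef} equals the true vector length $\ell_m$ once $k$ is large, and orders of vanishing survive the limit, so $f$ vanishes to order $\ge 1$ at the innermost forced root $\rho$ (which is $\sqrt2$ for the $E_8$ lengths and $2$ for the Leech lengths), vanishes to order $\ge 2$ at every other $\ell_m$, and $\widehat f$ vanishes to order $\ge 2$ at $\rho$ and at every other $\ell_m$. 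Passing to the Fourier eigencomponents $f_\pm=\tfrac12(f\pm\widehat f)$, each $f_\pm$ is an admissible radial $(\pm 1)$-eigenfunction of the Fourier transform with $f_\pm(\ell_m)=0$ for all $m$, $f_\pm'(\ell_m)=0$ for every $\ell_m\ne\rho$, and $f_\pm'(\rho)=\tfrac12 f'(\rho)\ne0$. Normalizing by $f_\pm'(\rho)=1$ and writing $A(n)=f_+(0)$, $B(n)=f_-(0)$, the target is the scaling-invariant ratio $f(0)/\widehat f(0)=\bigl(A(n)+B(n)\bigr)/\bigl(A(n)-B(n)\bigr)$, so it suffices to find $A(n)$ and $B(n)$.

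Second, each $f_\pm$ is pinned down by its interpolation data, which is now zero apart from $f_\pm'(\rho)=1$. The relevant input is the Fourier interpolation formula in dimension $n$: a radial Schwartz function is recovered from the values $f(\ell_m),\widehat f(\ell_m)$ and derivatives $f'(\ell_m),\widehat f'(\ell_m)$, subject to a linear relation among these data (Poisson summation over the lattice, in the classical integer case). This is a theorem for $n=8$ and $n=24$ and is expected throughout the stated ranges; granting it, $f_\pm$ is determined by its prescribed derivative at $\rho$ together with the relation, which expresses $f_\pm(0)$ in terms of the remaining data. Thus $A(n)$ and $B(n)$ are well defined, and everything comes down to making the relation explicit as a function of $n$.

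Third, to compute the relation I would represent $f_\pm$ in Viazovska's style as $f_\pm(x)=\int_{\mathcal C}\varphi_\pm(z)\,e^{\pi i|x|^2 z}\,dz$ over a suitable contour $\mathcal C$, where $\varphi_\pm$ is a weakly holomorphic (quasi-)modular form --- for the $E_8$ lengths on $\Gamma(2)$, built from the Jacobi theta constants $\theta_2,\theta_3,\theta_4$; for the Leech lengths on $\mathrm{SL}_2(\Z)$, built from $E_2,E_4,E_6,\Delta$ --- whose weight depends affinely on $n$ and reduces at $n=8$ (resp.\ $n=24$) to the weight in Viazovska's (resp.\ the Leech) construction. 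The prescribed vanishing of $f_\pm$ at the $\ell_m$ and the eigenvalue $\pm1$ translate, exactly as in the known cases, into the principal part of $\varphi_\pm$ at the cusps and its behaviour under $z\mapsto-1/z$, which fix $\varphi_\pm$ uniquely; then $f_\pm(0)$ is obtained by deforming $\mathcal C$ and reading off the relevant Fourier coefficients of $\varphi_\pm$. Equivalently, the linear relation above is the quasimodular correction to the formal transformation law of the generating function $\sum_m r_{E_8}(2m)q^m$ analytically continued to $E_4(q)^{n/8}$ (and its Leech analogue); this correction is polynomial in the weight, hence rational in $n$, so $A(n)$, $B(n)$, and therefore $f(0)/\widehat f(0)$, are rational functions of $n$. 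Matching the finitely many numerically computed values against a rational function of controlled degree then identifies it with the asserted formula. The denominators have a transparent meaning: their zeros ($n=10,14,18$, and for the Leech case $n=26,34,38,42$ together with the roots of the cubic factor) are the values of $n$ at which the construction degenerates --- the relevant space of forms jumps, the naive contour integral must be regularized, and extraneous real roots appear, as already noted after Conjecture~\ref{conj:roots} at $n=10$ --- and the larger Leech denominator reflects the richer structure of $\mathrm{SL}_2(\Z)$-modular forms. As a sanity check, $n=8$ gives the value $1$, consistent with Poisson summation over $E_8$.

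The main obstacle is twofold. First, and most seriously, one must prove that the limit $f$ exists (the general-$n$ analogue of Conjecture~\ref{conj:doubleconv}); without this the statement is conditional, and it is precisely the open problem the paper is built around. Second, the Fourier interpolation formula and the uniqueness of $\varphi_\pm$ are established only for the integer dimensions $8$ and $24$, whereas the conjecture concerns a real interval of values of $n$, so one must develop these constructions for modular forms of real weight (multiplier systems on $\Gamma(2)$ and $\mathrm{SL}_2(\Z)$) and control how they degenerate across $n=10,14,18$ and the Leech loci, where the relevant space of forms changes and the contour integral needs regularization. Granted those two inputs, the remainder is an explicit if lengthy computation with theta constants and Eisenstein series, and the rationality in $n$ is forced by the polynomial dependence of the quasimodular correction on the weight.
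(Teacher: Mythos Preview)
The statement you are attempting to prove is stated in the paper as a \emph{conjecture}, not a theorem; the paper offers no proof whatsoever. The only comment the paper makes is that ``this conjecture is evidence that the limiting functions have even more intricate structure than is apparent just from the $n=8$ and $n=24$ cases.'' There is therefore no paper proof to compare your proposal against.

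Your proposal is not a proof but a program, and you correctly identify its two load-bearing gaps: the existence of the limiting function for general $n$ (the analogue of Conjecture~\ref{conj:doubleconv}, which is open even for $n=8,24$ in the paper's framework), and a Fourier interpolation theorem with modular-form kernels valid for real $n$ in the stated ranges. Neither is known; the paper in fact predates the Radchenko--Viazovska interpolation theorem and the Cohn--Kumar--Miller--Radchenko--Viazovska construction for $n=24$. Beyond those two conditional inputs, one further step in your outline is not a proof step as written: ``matching the finitely many numerically computed values against a rational function of controlled degree'' only yields a proof if you have first established, from the modular-forms construction itself, an a~priori bound on the numerator and denominator degrees. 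Without that bound the matching is heuristic, which is presumably exactly how the paper's authors arrived at the formula in the first place.
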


This conjecture is evidence that the limiting functions have even more
intricate structure than is apparent just from the $n=8$ and $n=24$ cases.

Note that for reasons of computational efficiency, one should never
solve the equations~\eqref{constreqn} directly. Instead,
it is more convenient to solve two systems, each of half the size.  To form them, we write the
polynomial $q_k$ from the definition $q_k(|x|^2) e^{-\pi|x|^2}$ of $f_k(x)$
as the sum $q_k^0 + q_k^1$,
where
\[
q_k^\varepsilon = \frac{q_k + (-1)^\varepsilon \T q_k}{2}
\]
for $\varepsilon \in \{0,1\}$. Then
\[
\T q_k^\varepsilon =
(-1)^\varepsilon q_k^\varepsilon.
\]
(In other words, we have diagonalized the Fourier transform.)

We can express $q_k^0$ as a linear combination of the rescaled Laguerre polynomials
$p_j$ with $j$ even, and $q_k^1$ as a linear combination with $j$
odd. The constraints on $f_k$ and $\widehat{f}_k$ amount to the
following individual constraints on $q_k^\varepsilon$:
\begin{equation}
\label{const2a} \textup{$q_k^\varepsilon$ vanishes to order $1$ at
$r_1^2$ and order $2$ at $r_2^2, r_3^2,\dots, r_k^2$.}
\end{equation}
The only missing constraint is that $\widehat{f_k}$ must have a
double root at $r_1$ (\eqref{const2a} forces only a single root).
The issue is that given only the constraints above, $q_k^0$ and
$q_k^1$ are only determined up to scaling, and may be scaled
independently; to produce the double root the scalings must be
compatible.

The following determinant gives a formula for
$q_k^\varepsilon(x)$, up to scaling (it follows using the approach
of Lemma~\ref{unique} that this determinant is not identically
zero):
\[
\left|\begin{matrix}
p_\varepsilon(x) & p_{2+\varepsilon}(x) & p_{4+\varepsilon}(x) & \cdots &  p_{4k-2+\varepsilon}(x) \\
p_\varepsilon(r_1^2) & p_{2+\varepsilon}(r_1^2) & p_{4+\varepsilon}(r_1^2) & \cdots &  p_{4k-2+\varepsilon}(r_1^2) \\
p_\varepsilon(r_2^2) & p_{2+\varepsilon}(r_2^2) & p_{4+\varepsilon}(r_2^2) & \cdots &  p_{4k-2+\varepsilon}(r_2^2) \\
p_\varepsilon'(r_2^2) & p_{2+\varepsilon}'(r_2^2) & p_{4+\varepsilon}'(r_2^2) & \cdots &  p_{4k-2+\varepsilon}'(r_2^2) \\
p_\varepsilon(r_3^2) & p_{2+\varepsilon}(r_3^2) & p_{4+\varepsilon}(r_3^2) & \cdots &  p_{4k-2+\varepsilon}(r_3^2) \\
p_\varepsilon'(r_3^2) & p_{2+\varepsilon}'(r_3^2) & p_{4+\varepsilon}'(r_3^2) & \cdots &  p_{4k-2+\varepsilon}'(r_3^2) \\
& & \vdots & & \\
p_\varepsilon(r_k^2) & p_{2+\varepsilon}(r_k^2) & p_{4+\varepsilon}(r_k^2) & \cdots &  p_{4k-2+\varepsilon}(r_k^2) \\
p_\varepsilon'(r_k^2) & p_{2+\varepsilon}'(r_k^2) & p_{4+\varepsilon}'(r_k^2) & \cdots &  p_{4k-2+\varepsilon}'(r_k^2) \\
\end{matrix}\right|
\]
It is tempting to take the limit as $k \to \infty$ and hope to
write down an infinite determinant for the limiting function.
However, we see no way to make sense of this idea.

Computing $q_k^0$ and $q_k^1$ independently is substantially faster than
computing $q_k$ (approximately four times faster using a cubic-time
algorithm). So far, it has not led to any theoretical advances, but in
Section~\ref{sec:single} we will see a closely related example in which it is
theoretically important to separate the Fourier eigenfunctions.

\section{Rationality}
\label{sec:rat}

Although we are unable to identify the proposed limiting functions $f$ for
dimensions 8 and 24, we can say two things about their special values.
In fact, the analysis we provide applies to the functions in the statement of
Conjecture~\ref{conjecture:cohnelkies},
and in particular the function explicitly exhibited in \cite{V} for the $n=8$ case.
The first is a property we can derive, while the second has been observed only
numerically and so far lacks an explanation.

The first observation is that we can predict the value of
$f'(r_1)$, where $r_1$ is the first forced root.  Here we view
$f$ as a function of a single radial variable, so $f'$ is the
radial derivative. By condition \eqref{constraints}, knowing
$f'(r_1)$ means we know the values of both $f$ and $f'$ at
every vector length in the respective lattices ($E_8$ and
Leech) for dimensions 8 and 24.

\begin{lemma}
\label{fprime} Let $n \in \{2,8,24\}$, and let $f$ be a hypothetical optimal
function for use in Theorem~\ref{theorem:cohnelkies}, as in
Conjecture~\ref{conjecture:cohnelkies}. Then
\begin{equation}\label{fprimeformula}
    f'(r_1) = -\frac{n}{Nr_1}\widehat{f}(0),
\end{equation}
where
\[
r_1 = \text{the minimal vector length} =
\begin{cases}
(4/3)^{1/4} & \textup{for $n=2$,}\\
\sqrt{2} & \textup{for $n=8$, and}\\
2 & \textup{for $n=24$}\\
\end{cases}
\]
and
\[
    N = \text{the number of minimal vectors} = \begin{cases}
    6 & \textup{for $n = 2$,}\\
    240  &   \textup{for $n  =  8$, and} \\
    196560   &  \textup{for $n  =  24$.} \\
\end{cases}
\]
\end{lemma}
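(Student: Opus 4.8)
The plan is to differentiate the Poisson summation formula with respect to a scaling of the underlying lattice. Let $\Lambda$ be the relevant optimal lattice --- the hexagonal lattice, $E_8$, or the Leech lattice --- normalized so that $|\Lambda|=1$. With this normalization $\Lambda$ has minimal vector length exactly $r_1$ and exactly $N$ minimal vectors, and it is precisely this normalization for which the bound of Theorem~\ref{theorem:cohnelkies} with parameter $r_1$ is sharp; as explained in the proof of that theorem, sharpness forces both inequalities in
\[
f(0) \ge \sum_{x\in\Lambda} f(x) = \frac{1}{|\Lambda|}\sum_{t\in\Lambda^*}\widehat f(t) \ge \widehat f(0)
\]
to be equalities, so (taking $f$ to be radial, without loss of generality) $f$ vanishes at every nonzero vector length of $\Lambda$ and $\widehat f$ vanishes at every nonzero vector length of $\Lambda^*$. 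Since $\Lambda^* = \Lambda$ for $n \in \{8,24\}$ and $\Lambda^*$ is a rotation of $\Lambda$ for $n = 2$, in every case $\widehat f$ also vanishes at every nonzero vector length of $\Lambda$. Writing $r_1 < r_2 < \cdots$ for these lengths, each $r_m$ with $m \ge 2$ is an interior local maximum of $f$ on $(r_1,\infty)$ with value $0$ (using $f \le 0$ there), so $f'(r_m) = 0$; and each $r_m$ with $m \ge 1$ is an interior local minimum of $\widehat f$ on $(0,\infty)$ with value $0$ (using $\widehat f \ge 0$), so $\widehat f'(r_m) = 0$.

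Next I would introduce the scaling. For $\alpha > 0$ the function $x \mapsto f(\alpha x)$ is again admissible, with Fourier transform $t \mapsto \alpha^{-n}\widehat f(t/\alpha)$, so Poisson summation over $\Lambda$ together with radiality gives
\[
g(\alpha) \;:=\; f(0) + \sum_{x \in \Lambda_{\neq 0}} f(\alpha |x|) \;=\; \alpha^{-n}\Bigl(\widehat f(0) + \sum_{t \in \Lambda^*_{\neq 0}} \widehat f(|t|/\alpha)\Bigr) \;=:\; h(\alpha)
\]
for all $\alpha > 0$, the $\alpha = 1$ specialization being $f(0) = \widehat f(0)$. Differentiating in $\alpha$ and setting $\alpha = 1$, the left side gives $g'(1) = \sum_{x \in \Lambda_{\neq 0}} |x|\,f'(|x|)$; every term with $|x| > r_1$ drops out because $f'$ vanishes there, leaving $g'(1) = N r_1\, f'(r_1)$. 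On the right side the product rule gives $h'(1) = -n\,\widehat f(0) - \sum_{t \in \Lambda^*_{\neq 0}}\bigl(n\,\widehat f(|t|) + |t|\,\widehat f'(|t|)\bigr)$, and the entire sum vanishes because $\widehat f$ and $\widehat f'$ both vanish at every nonzero vector length, leaving $h'(1) = -n\,\widehat f(0)$. Equating $g'(1) = h'(1)$ yields $N r_1\, f'(r_1) = -n\,\widehat f(0)$, which is exactly \eqref{fprimeformula}; in particular $f'(r_1) \neq 0$, so the root of $f$ at $r_1$ is simple.

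The hard part is justifying the termwise differentiation of the series $g$ and $h$ at $\alpha = 1$: plain admissibility controls $f$ and $\widehat f$ but not their derivatives. I would handle this by invoking the additional regularity that the functions in Conjecture~\ref{conjecture:cohnelkies} are expected to possess --- and, for $n = 8$, are known by \cite{V} to possess --- namely that $f$ and $\widehat f$ are Schwartz (which is anyway implicitly needed to even speak of $f'(r_1)$), so that the termwise-differentiated series for $g$ and $h$ converge uniformly for $\alpha$ near $1$. Equivalently, one can observe that under this regularity $g$ and $h$ are real-analytic near $\alpha = 1$ and simply compare the coefficients of $\alpha - 1$ in their Taylor expansions there: every nonzero term of $g$ with $|x| > r_1$, and every nonzero term of $h$, vanishes to order at least two in $\alpha - 1$, so those coefficients reduce to the derivative values computed above and the identity \eqref{fprimeformula} follows.
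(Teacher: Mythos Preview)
Your proof is correct and follows essentially the same approach as the paper: both differentiate the Poisson summation identity with respect to a scaling parameter and use the forced vanishing of $f$, $f'$, $\widehat f$, $\widehat f'$ at the nonzero vector lengths to isolate the minimal-length contribution. The paper packages this slightly more compactly by applying Poisson summation once to the single function $F(x)=|x|f'(x)$ (whose Fourier transform is $-n\widehat f(t)-|t|\widehat f'(t)$) rather than differentiating a one-parameter family of Poisson identities, but the content is the same; you are in fact more careful than the paper about the regularity needed to justify the step.
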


Note that without loss of generality, we assume that $f$ is radial.

\begin{proof}
Define rescaled functions for $r>0$ by
\[
    f_r(x) = f(rx) \qquad \textup{and} \qquad
    \widehat{f}_r(t)     = r^{-n}  \widehat{f}\left({
    {t}/{r}}\right).
\]
Let
\[
    F(x) =  \left. \frac{d}{dr}\right|_{r=1}  f_r(x)     = |x|f'(x),
\]
so that
\[
    \widehat{F}(t) =  - n  \widehat{f}(t) -     t\widehat{f}\; '(t).
\]
Now apply Poisson summation to $F$ over optimal lattice $\Lambda$.
Removing terms where $F$ or $\widehat{F}$ is forced to vanish,
this identity states
\[
    \sum_{x \in \Lambda, \,|x|=r_1} |x|f'(x) = -n\widehat{f}(0),
\]
which is \eqref{fprimeformula}.
\end{proof}

The second---and perhaps more interesting---feature we have noticed is that the Taylor series
for $f$ and $\widehat{f}$, normalized so that
$f(0)=\widehat{f}(0)=1$, have rational quadratic coefficients.
Table~\ref{table:coeffs} shows numerical
evidence for this.  It displays the second and fourth Taylor
coefficients for $f$ and $\widehat{f}$ in dimensions $8$ and
$24$.  (We cannot be certain that all the reported digits are
correct for the limiting functions, but they agree for $k=300$
and $k=600$.) One can see from the decimal expansions that the
quadratic coefficients are rational, but the quartic
coefficients remain mysterious.

\begin{table}
\caption{Approximate Taylor series coefficients of $f$ and
$\widehat{f}$ about $0$.} \label{table:coeffs}
\begin{tabular}{ccccc}
\toprule
$n$ & function & order & coefficient & conjecture\\
\midrule $8$ & $f$ & $2$ & $-2.7000000000000000000000000000\dots$ & $-27/10$\\
$8$ & $\widehat{f}$ & $2$ & $-1.5000000000000000000000000000\dots$ & $-3/2$\\
$24$ & $f$ & $2$ & $-2.6276556776556776556776556776\dots$ & $-14347/5460$\\
$24$ & $\widehat{f}$ & $2$ & $-1.3141025641025641025641025641\dots$ & $-205/156$\\
$8$ & $f$ & $4$ & $\phantom{-}4.2167501240968298210998965628\dots$ & ?\\
$8$ & $\widehat{f}$ & $4$ & $-1.2397969070295980026220596589\dots$ & ?\\
$24$ & $f$ & $4$ & $\phantom{-}3.8619903167183007758184168473\dots$ & ?\\
$24$ & $\widehat{f}$ & $4$ & $-0.7376727789015322303799539712\dots$ & ?\\
\bottomrule
\end{tabular}
\end{table}

\begin{conjecture} \label{conjecture:rational}
For $n=8$, the limiting functions $f$ and $\widehat{f}$ have quadratic Taylor
coefficients $-27/10$ and $-3/2$, respectively (when normalized so that
$f(0)=\widehat{f}(0)=1$). For $n=24$, the corresponding coefficients are
$-14347/5460$ and $-205/156$.
\end{conjecture}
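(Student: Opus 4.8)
The plan is to pass from the Taylor coefficients to certain regularized moments of $f$ and $\widehat f$, and then to evaluate those moments using the modular realization of $f$ — which is available for $n=8$ by \cite{V} and, at present, only conjecturally for $n=24$. For the reduction, note that since $f$ is radial and rapidly decreasing, expanding $e^{-2\pi i\langle x,t\rangle}$ in $\widehat f(t)=\int_{\R^n}f(x)e^{-2\pi i\langle x,t\rangle}\,dx$ and discarding the vanishing odd moments shows that the coefficient of $|t|^2$ in the Taylor expansion of $\widehat f$ about $0$ equals $-\frac{2\pi^2}{n}\int_{\R^n}|x|^2 f(x)\,dx$, and symmetrically the coefficient of $|x|^2$ in $f$ equals $-\frac{2\pi^2}{n}\int_{\R^n}|t|^2\widehat f(t)\,dt$. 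Since $f(0)=\widehat f(0)=\int f=\int\widehat f$, the conjecture is equivalent to the statement that the moment ratios $\frac{\int|x|^2 f}{\int f}$ and $\frac{\int|t|^2\widehat f}{\int\widehat f}$ are the prescribed rational multiples of $n/\pi^2$.

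For the evaluation I would use that each of the two Fourier-eigencomponents of $f$ — compare the splitting $q_k=q_k^0+q_k^1$ discussed after \eqref{constreqn} — arises, up to an elementary prefactor and normalization, by applying a suitable regularized integral transform of a weakly holomorphic (quasi)modular form $\varphi$ of level $1$ against the Gaussian $e^{-\pi t|x|^2}$; for $n=8$ these are the forms built from $E_2$, $E_4$, $E_6$ and $\Delta$ exhibited in \cite{V}, and for $n=24$ one would use the analogous construction. Expanding the Gaussian and integrating term by term identifies the radial Taylor coefficients of each eigenpiece, up to rational combinatorial factors and powers of $\pi$, with regularized Mellin transforms $\int_0^\infty\varphi(it)\,t^{s}\,dt$ at a shifted family of integral arguments $s$, i.e.\ with regularized $L$-values of level-$1$ modular forms. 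By Hecke's functional-equation argument such a regularized value reduces to an explicit contribution from the principal part of $\varphi$ at the cusp — a rational number times a power of $\pi$ — together with a residue term proportional to the constant Fourier coefficient of $\varphi$, which is rational. Forming the ratio of the Mellin value controlling the quadratic coefficient to the one controlling $f(0)$ then cancels the powers of $\pi$ and all transcendental constants, leaving a rational number, which one computes explicitly and checks against $-27/10$, $-3/2$ for $n=8$ and $-14347/5460$, $-205/156$ for $n=24$.

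I expect the difficulty to be concentrated in two places. For $n=8$ it is the regularization just invoked: because $\varphi$ blows up at the cusp, the Mellin integral diverges and must be defined by analytic continuation (or by cutting the contour and folding with the modular transformation), and one must track precisely which boundary and residue terms survive at the relevant arguments and verify that they are rational — this is the analytic core, and it is sensitive to the exact shape of Viazovska's functions. For $n=24$ the obstacle is more basic: no Viazovska-type construction is presently known, so the argument is conditional on first producing one (equivalently, on Conjecture~\ref{conjecture:cohnelkies} for $n=24$ together with an explicit modular description of $f$). A purely Poisson-summation approach in the spirit of Lemma~\ref{fprime} does not seem adequate: summing $f$ over the dilated lattices $s\Lambda$ and matching Taylor coefficients in $s$ recovers Lemma~\ref{fprime} at first order and only relations among the second derivatives $f''(r_m)$, $\widehat f''(r_m)$ at the lattice shells at second order, while applying Poisson summation to the Laplacian of $f$ or to $|x|^2 f(x)$ stalls because those test functions no longer vanish at the nonzero shells — the double zeros of $f$ and $\widehat f$ are destroyed by the Laplacian — leaving an unevaluated sum $\sum_m N_m r_m^2\,\widehat f''(r_m)$. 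So the modular input appears unavoidable, and the missing dimension-$24$ construction is the real crux; as a side remark, one might instead try to show directly from \eqref{constreqn} that $q_k'(0)/q_k(0)-\pi$ is already a fixed rational number for every $k$, but this would require controlling the arithmetic of the solution $(c_0,\dots,c_{4k-1})$ well enough to evaluate the linear functionals $\sum_j c_j p_j(0)$ and $\sum_j c_j p_j'(0)$, which we have not managed.
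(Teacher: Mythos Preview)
There is no proof in the paper for you to be compared against: the statement is presented as a \emph{conjecture}, supported only by the numerical evidence in Table~\ref{table:coeffs}. The paper's entire treatment consists of (i) the decimal data, (ii) the remark that the same values hold for Viazovska's explicit $n=8$ function, and (iii) a reformulation via the Mellin transform, showing that the conjecture amounts to specifying $M_f(n+2)$ and $M_{\widehat f}(n+2)$. Your moment reduction in the first paragraph is equivalent to that Mellin reformulation, so up to that point you are simply recovering what the paper already says.

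Beyond that, what you have written is not a proof but a research outline, and you acknowledge as much. The outline is sound as a strategy: for $n=8$, computing the quadratic Taylor coefficients directly from Viazovska's modular construction is exactly the natural thing to do, and your description of the mechanism (regularized Mellin values of the weakly holomorphic forms, with the rationality coming from principal parts and constant terms) is the right shape. But you have not actually carried out the regularization or the arithmetic, and you correctly flag that step as the analytic core. For $n=24$ you are right that, at the time of the paper, no modular construction existed; it was supplied shortly afterward by Cohn, Kumar, Miller, Radchenko, and Viazovska, at which point the same approach goes through. Your closing observations about why Poisson-summation tricks in the style of Lemma~\ref{fprime} stall at second order are also correct and explain why the paper did not push further by elementary means.

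In short: the paper offers no proof, your reduction matches the paper's Mellin reformulation, and your proposed evaluation via modular forms is the right plan but remains a plan.
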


The same is true when $n=8$ for the functions studied in \cite{V}.

We do not know whether the higher Taylor coefficients are rational or even
given by simple expressions at all.  Needless to say, it would be interesting
to have explicit formulas for the general coefficients, because this would
give a direct construction of $f$ and $\widehat{f}$ by power series and
analytic continuation.

To put this conjecture in a slightly broader context, consider the
Mellin transform
\[
M_f(s) = \int_0^\infty f(x) x^{s-1} \, dx.
\]
When $f$ is smooth and rapidly decreasing (as
Conjecture~\ref{conj:doubleconv} implies), the integral converges to a
holomorphic function for $\Re{s}>0$.  It is a standard fact that $M_f(s)$
can be meromorphically continued to $\C$, with at most simple poles at $s\in
\Z_{\le 0}$; furthermore, for integers  $j\ge 0$ its residue at $s=-j$ is
the $j$-th Taylor coefficient of $f$. To see why, note that if $f(x)$ has
the Taylor series expansion $\sum_{j \ge 0} c_j x^j$ about $x=0$, then
\[
M_f(s) = \int_0^1 \left(f(x) - \sum_{j=0}^\ell c_j x^j\right)x^{s-1} \, dx +
\sum_{j=0}^\ell \frac{c_j}{s+j} + \int_1^\infty f(x) x^{s-1} \, dx,
\]
where both integrals converge as long as $\Re{s}>-(\ell+1)$.  Since our function $f$ is radial, its Taylor coefficients $c_j$ vanish if $j$ is odd.

A short calculation (see \cite[Theorem~5.9]{LL}) shows that if $\widehat{f}$ is the $n$-dimensional
Fourier transform of $f$ (interpreted as a radial function), then
\begin{equation} \label{eq:mellin-symm}
M_{\widehat{f}}(s) = \frac{\pi^{n/2-s}\Gamma(s/2)}{\Gamma((n-s)/2)} M_f(n-s),
\end{equation}
valid as an identity of meromorphic functions on $\C$.
In particular, computing the residue of $M_f(s)$ at $s=-j$
shows that the $j$-th Taylor coefficient of $f$ equals
\[
(-1)^{j/2} \frac{2\pi^{j+n/2}}{\Gamma(j/2+1)\Gamma((n+j)/2)} M_{\widehat{f}}(n+j),
\]
and vice versa with $f$ and $\widehat{f}$ switched.

Thus, in $n$ dimensions Conjecture~\ref{conjecture:rational} amounts to
specifying $M_f(n+2)$ and $M_{\widehat{f}}(n+2)$.  The values $M_f(n)$ and
$M_{\widehat{f}}(n)$ are easy consequences of $f(0)=\widehat{f}(0)=1$.  We
have identified one other value, namely the midpoint $4$ of the $s
\leftrightarrow n-s$ symmetry when $n=8$:

\begin{conjecture} \label{conjecture:midvalue}
For $n=8$, the limiting functions satisfy
\[
M_f(4) = M_{\widehat{f}}(4) = \frac{1}{15}
\]
when normalized with $f(0)=\widehat{f}(0)=1$.
\end{conjecture}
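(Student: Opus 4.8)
The plan is to reduce the statement to an explicit integral of modular forms and then evaluate it, using Viazovska's construction \cite{V} of the $n=8$ magic function. First I would record that half of the assertion is automatic: in \eqref{eq:mellin-symm} with $n=8$ and $s=4$ the prefactor $\pi^{n/2-s}\Gamma(s/2)/\Gamma((n-s)/2)$ equals $\Gamma(2)/\Gamma(2)=1$, so $M_f(4)=M_{\widehat f}(4)$. Moreover, writing $f=a+b$ with $\widehat a=a$ and $\widehat b=-b$, the same identity gives $M_b(4)=M_{\widehat b}(4)=-M_b(4)$, hence $M_b(4)=0$; thus $M_f(4)=M_a(4)$, and only the $+1$-eigenfunction $a=(f+\widehat f)/2$ matters. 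For this I would insert the explicit description of $a$: after deforming the contour in Viazovska's defining integral to the positive imaginary axis, $a(r)$ has the shape (up to an elementary prefactor recording the simple forced root at $r=\sqrt2$) of $\sin^2(\pi r^2/2)$ times a Laplace transform $\int_0^\infty\psi(t)\,e^{-\pi r^2 t}\,dt$, where $\psi(t)$ is assembled from $E_2,E_4,E_6,\Delta$ evaluated at $it$, together with polynomial-in-$r^2$ corrections of the same form coming from the weight bookkeeping and the quasimodularity of $E_2$.

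Next I would compute $M_a(4)=\int_0^\infty a(r)\,r^3\,dr$ by interchanging the order of integration, using
\[
\int_0^\infty \sin^2\!\Big(\frac{\pi r^2}{2}\Big)e^{-\pi r^2 t}\,r^{s-1}\,dr=\frac{\Gamma(s/2)}{4\pi^{s/2}}\left[t^{-s/2}-(t^2+1)^{-s/4}\cos\!\Big(\frac s2\arctan\frac1t\Big)\right],
\]
which at $s=4$ collapses to the rational kernel $\frac{1}{4\pi^2}\cdot\frac{3t^2+1}{t^2(t^2+1)^2}$; the polynomial-in-$r^2$ corrections are absorbed via $r^2e^{-\pi r^2 t}=-\pi^{-1}\partial_t e^{-\pi r^2 t}$ and integration by parts, giving similar rational kernels. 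So $M_f(4)$ becomes a finite combination of convergent integrals $\int_0^\infty(\text{polynomial in }E_2,E_4,E_6,\Delta\text{ at }it)\,R(t)\,dt$ with $R$ explicit and rational. As an independent check on the target value, the theta-type functional equation $\sum_{x\in E_8}a(\sqrt t\,x)=t^{-4}\sum_{x\in E_8}a(x/\sqrt t)$ together with $\zeta_{E_8}(4)=60\,\zeta(2)\zeta(-1)=-\tfrac{5\pi^2}{6}$ shows, after the standard Riemann-style fold, that the conjecture is equivalent to $\int_1^\infty\big(\sum_{x\in E_8}a(\sqrt t\,x)-1\big)\,t\,dt=\tfrac12-\tfrac{\pi^2}{18}$.

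To finish, I would expand each modular form in its $q$-series and fold $\int_0^\infty$ into $\int_1^\infty$ plus a residual piece using the $t\mapsto1/t$ symmetry built into Viazovska's construction; the residual piece should be elementary (involving $\zeta(2)=\pi^2/6$ and small rationals, the powers of $\pi$ cancelling against the $1/\pi^2$ in the kernel) and the folded tail should resum in closed form, with total $1/15$. The main obstacle is exactly this last step: carrying out the contour deformation while tracking \emph{every} boundary and polynomial correction term, and handling the regularization forced by the weak holomorphy of the forms (they have poles at the cusp $i\infty$), which makes naive term-by-term integration diverge so that the $t\mapsto1/t$ symmetry must be invoked to resum. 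The conceptual steps are routine, but this bookkeeping is delicate and it is easy to lose a constant; confirming that the answer is \emph{exactly} $1/15$ rather than a nearby rational is the crux. The same computation run at $s=10$ in place of $s=4$ would simultaneously settle the $n=8$ case of Conjecture~\ref{conjecture:rational}, since the residue discussion following \eqref{eq:mellin-symm} shows that conjecture to be equivalent to $M_f(10)=18/\pi^6$ and $M_{\widehat f}(10)=162/(5\pi^6)$.
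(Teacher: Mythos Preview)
The paper does not prove this statement: it is stated as a conjecture, and the only justification offered is numerical evidence together with the remark that the equality $M_f(4)=M_{\widehat f}(4)$ follows from \eqref{eq:mellin-symm}. There is thus no ``paper's own proof'' to compare against; your proposal goes strictly beyond what the paper attempts.

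Your plan is a reasonable one. The reduction to the $+1$-eigenfunction via $M_b(4)=-M_b(4)=0$ is correct, the auxiliary integral identity you display is correct (and does collapse at $s=4$ to the rational kernel you wrote), and your reformulation via the Riemann fold also checks out: with $\Theta_a(t)=\sum_{x\in E_8}a(\sqrt t\,x)$ and its functional equation $\Theta_a(t)=t^{-4}\Theta_a(1/t)$, one gets for the analytically continued integral
\[
2\,\zeta_{E_8}(4)\,M_a(4)=2\int_1^\infty\bigl(\Theta_a(t)-1\bigr)\,t\,dt-1,
\]
and since $\zeta_{E_8}(4)=60\,\zeta(2)\zeta(-1)=-5\pi^2/6$ (with your convention $\zeta_{E_8}(s)=\sum_{x\ne 0}|x|^{-s}$, understood by analytic continuation), the target $M_a(4)=1/15$ is indeed equivalent to $\int_1^\infty(\Theta_a(t)-1)\,t\,dt=\tfrac12-\tfrac{\pi^2}{18}$. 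It is worth flagging explicitly that $\sum_{x\ne 0}|x|^{-4}$ diverges and that the value $-5\pi^2/6$ is the analytic continuation; this is exactly what the fold accomplishes, but a reader could be misled by the bare equation.

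What remains is precisely the gap you name: actually carrying out the $t$-integral against Viazovska's modular input, with all boundary terms from the contour shift, the quasimodular corrections from $E_2$, and the regularization of the cusp singularities tracked. That is a genuine (if mechanical) computation, not yet done in your write-up, so the proposal is a credible outline rather than a proof. The paper, for its part, does not attempt any of this.
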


The equality $M_f(4) = M_{\widehat{f}}(4)$ follows from
\eqref{eq:mellin-symm}, but not the value $1/15$.  It is natural to expect a
corresponding conjecture for $n=24$, but we have been unable to identify the
numerical value
\[
M_f(12) = M_{\widehat{f}}(12) = 0.177860964729650276645646126241\ldots
\]
in that case.

\section{Energy minimization}
\label{sec:energy}

One natural generalization of sphere packing is potential energy
minimization.  Given a radial potential function $\varphi \colon \R^n \to \R$
and a set $\mathcal{P}$ of point particles, the energy
$E_\varphi(\mathcal{P},x)$ of a particle $x \in \mathcal{P}$ is
defined to be
\[
\sum_{y \in \mathcal{P},\ y \neq x} \varphi(x-y),
\]
and the energy $E_\varphi(\mathcal{P})$ is defined as the average of
$E_\varphi(\mathcal{P},x)$ over all $x \in \mathcal{P}$.  (Of course some
hypotheses are needed for this to make sense, but it is well defined when
$\mathcal{P}$ is a periodic discrete set and $\varphi$ is rapidly
decreasing.) The question of  how to choose $\mathcal{P}$ so as to minimize
energy with a fixed density, arises naturally in physics; see \cite{C2} for
a survey.

Cohn and Kumar \cite{CK2} defined a configuration $\mathcal{P}$ to be
\emph{universally optimal} if it minimizes energy whenever $\varphi(x)$ is
completely monotonic as a function of $|x|^2$ and decreases sufficiently
quickly.  For example, $\varphi$ could be a sufficiently steep inverse power
law. As explained in Section~9 of \cite{CK2}, it suffices to check optimality
for the Gaussians $\varphi(x) = e^{-c|x|^2}$ with $c>0$, i.e.,
 the Gaussian core model \cite{S} from mathematical physics. Cohn and
Kumar conjectured that the hexagonal lattice, $E_8$, and the Leech lattice
are universally optimal.  (See \cite{CKS} for information about ground states
in other dimensions.)

Proposition~9.3 of \cite{CK2} offers an approach to proving this conjecture
by linear programming bounds, which Cohn and Kumar conjectured were sharp in
these special dimensions (much like the case of sphere packing).  Given an
admissible auxiliary function $h \colon \R^n \to \R$ satisfying $h \le
\varphi$ and $\widehat{h} \ge 0$ everywhere, this proposition says that every
configuration $\mathcal{P}$ of density $1$ satisfies
\[
E_\varphi(\mathcal{P}) \ge \widehat{h}(0)-h(0).
\]

We can construct $h$ by imitating the sphere packing construction: let $h(x)$
be a radial polynomial times $e^{-\pi|x|^2}$, with the polynomial chosen with
minimal degree so that $\varphi-h$ and $\widehat{h}$ have double roots at the
modified root locations from Section~\ref{sec:conj}.  We conjecture that as
the number of roots tends to infinity, these functions converge and the
limiting functions prove a sharp bound for energy.

The closest analogue of Conjectures~\ref{conjecture:rational}
and~\ref{conjecture:midvalue} we have found is the following.

\begin{conjecture} \label{conjecture:energy}
For the potential function $\varphi(x) = e^{-c|x|^2}$ in $\R^n$ with $n=8$ or
$24$, the limiting auxiliary function $h$ satisfies
\[
\widehat{h}(0) = \frac{2c}{n} E_\psi(\Lambda_n),
\]
where $\Lambda_n$ is $E_8$ or the Leech lattice when $n=8$ or $24$,
respectively, and $\psi(x) = |x|^2\varphi(x)$.
\end{conjecture}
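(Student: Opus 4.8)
The plan is to imitate the proof of Lemma~\ref{fprime} essentially verbatim, with the sphere packing optimal function $f$ replaced by the energy auxiliary function $h$, and to apply Poisson summation over $\Lambda_n$ to the scaling derivative of $h$. Throughout I take as given the conjectured facts about the limiting $h$: that it exists, is admissible, and attains the Cohn--Kumar bound $E_\varphi(\mathcal{P})\ge\widehat h(0)-h(0)$ with equality for $\mathcal{P}=\Lambda_n$ (which has covolume $1$ and is self-dual for $n=8,24$). Equality there forces $\varphi-h$ to vanish at every nonzero vector length of $\Lambda_n$ and $\widehat h$ to vanish at every nonzero vector length of $\Lambda_n^{*}=\Lambda_n$; since $\varphi-h\ge 0$ and $\widehat h\ge 0$, those zeros have even order, hence order at least $2$. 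In particular $h(x)=\varphi(x)$ and $h'(x)=\varphi'(x)$ for $x\in\Lambda_n\setminus\{0\}$, and $\widehat h(t)=\widehat h\;'(t)=0$ for $t\in\Lambda_n\setminus\{0\}$, where primes denote radial derivatives.

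With that in hand I would proceed exactly as in Lemma~\ref{fprime}. Set $h_r(x)=h(rx)$, so that $\widehat{h_r}(t)=r^{-n}\widehat h(t/r)$, and define
\[
H(x)=\left.\frac{d}{dr}\right|_{r=1}h_r(x)=|x|\,h'(x),\qquad
\widehat H(t)=-n\,\widehat h(t)-t\,\widehat h\;'(t),
\]
where $t$ abbreviates $|t|$ as in the excerpt; admissibility of $h$ and $\widehat h$ makes $H$ admissible, so Poisson summation applies to $H$ over $\Lambda_n$. On the Fourier side, the $t=0$ term contributes $-n\,\widehat h(0)$ and every $t\in\Lambda_n\setminus\{0\}$ contributes $0$, because there $\widehat h(t)=\widehat h\;'(t)=0$. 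On the spatial side the $x=0$ term is $0$, while for $x\in\Lambda_n\setminus\{0\}$ we have $H(x)=|x|\,h'(x)=|x|\,\varphi'(|x|)$; since $\varphi(s)=e^{-cs^2}$ obeys $s\,\varphi'(s)=-2c\,s^2\varphi(s)$, this equals $-2c\,|x|^2\varphi(x)=-2c\,\psi(x)$. Hence $\sum_{x\in\Lambda_n}H(x)=-2c\sum_{x\in\Lambda_n\setminus\{0\}}\psi(x)=-2c\,E_\psi(\Lambda_n)$, and equating the two sides of Poisson summation gives $-2c\,E_\psi(\Lambda_n)=-n\,\widehat h(0)$, i.e.\ $\widehat h(0)=\frac{2c}{n}E_\psi(\Lambda_n)$, as claimed.

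The only genuinely hard part is the same obstacle underlying every conjecture in this paper, and the only place the structure of dimensions $8$ and $24$ enters in an essential way: establishing that the limiting $h$ actually exists and actually achieves the linear programming bound with equality, equivalently that the roots of $\varphi-h$ and of $\widehat h$ fall exactly at the vector lengths of $\Lambda_n$ in the limit. Granting that, the computation above is completely routine --- structurally identical to Lemma~\ref{fprime}, the one new ingredient being the elementary Gaussian identity $s\,\varphi'(s)=-2c\,s^2\varphi(s)$, which is what converts the radial derivative appearing in $H$ into the factor $|x|^2$ that produces $\psi$. (One contrast with Lemma~\ref{fprime}: there only the minimal vectors survived on the spatial side, because $f$ has a simple sign change at $r_1$; here $\varphi-h$ has an even-order zero at \emph{every} vector length, but $\varphi'$ never vanishes, so all nonzero lattice terms survive and assemble into the full energy sum $E_\psi(\Lambda_n)$.)
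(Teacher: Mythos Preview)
Your argument is correct and in fact goes further than the paper does. The paper does not attempt to prove Conjecture~\ref{conjecture:energy}; it offers only numerical evidence together with a consistency check, namely that the identity $\widehat{h}(0)=\frac{2c}{n}E_\psi(\Lambda_n)$ is preserved under the duality $h\mapsto g:=\widehat{\varphi}-\widehat{h}$ exchanging $(\varphi,\Lambda_n)$ with $(\widehat{\varphi},\Lambda_n^*)$. That check shows the formula is self-consistent under a natural symmetry, but not that it follows from anything.

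Your approach, by contrast, derives the formula outright from the (conjectured) sharpness of the linear programming bound for $h$, via the same scaling-derivative and Poisson-summation device used in Lemma~\ref{fprime}. This is a genuine strengthening: it shows that Conjecture~\ref{conjecture:energy} is not an independent conjecture at all but a consequence of the optimality of $h$, a point the paper does not make. The one new ingredient beyond Lemma~\ref{fprime} is exactly the Gaussian identity $s\,\varphi'(s)=-2c\,s^2\varphi(s)$ you highlight, which is what turns the surviving spatial terms into $E_\psi$ rather than a single minimal-shell contribution. One minor caveat: you assert that admissibility of $h$ and $\widehat{h}$ makes $H(x)=|x|\,h'(x)$ admissible; strictly speaking, admissibility of $h$ alone does not control $h'$, but this is harmless here since the conjectured limiting $h$ is analytic with rapidly decreasing derivatives, exactly as in the setting of Conjecture~\ref{conj:doubleconv} and Lemma~\ref{fprime}.
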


Besides numerical evidence, one reason to believe this conjecture is that it
is compatible with duality symmetry.  If the auxiliary function $h$ proves an
energy bound for an integrable potential function $\varphi$, then $g :=
\widehat{\varphi}-\widehat{h}$ does so for $\widehat{\varphi}$. Specifically,
$g \le \widehat{\varphi}$ since $\widehat{h} \ge 0$, and $\widehat{g} \ge 0$
since $h \le \varphi$.  This duality transformation preserves optimality: if
$h$ proves that a lattice $\Lambda$ of covolume $1$ minimizes $E_\varphi$,
then $g$ proves that the dual lattice $\Lambda^*$ minimizes
$E_{\widehat{\varphi}}$.  To see why, note that $\varphi(0) + \widehat{h}(0)
- h(0) = \widehat{\varphi}(0) + \widehat{g}(0) - g(0)$, from which it follows
by Poisson summation that
\[
\widehat{h}(0) - h(0) = E_\varphi(\Lambda)
\]
if and only if
\[
\widehat{g}(0) - g(0) = E_{\widehat{\varphi}}(\Lambda^*).
\]
This duality is compatible with Conjecture~\ref{conjecture:energy}, in the
sense that $h$ satisfies the conjecture if and only if $g$ does; the
compatibility is not obvious, but it follows from a short calculation using
Poisson summation.

\section{Forcing single roots}
\label{sec:single}

In this section we discuss a related problem: constructing
functions with forced single roots (instead of the forced
double roots used earlier in the paper). Such functions do not
have direct applications to sphere packing, but they can be
explicitly written down in some cases and thus serve as a
testing ground for ideas concerning our main conjectures.
Furthermore, their properties are quite a bit more interesting
than one would guess from their definition.

The structure in this problem is best seen by forcing single roots for
Fourier eigenfunctions.  The use of eigenfunctions was merely a computational
convenience in Section~\ref{sec:num}, but in this section it will play an
essential role in our conjectures.

For $\varepsilon \in \{0,1\}$, let
\[
g^\varepsilon_{n,k} = r^\varepsilon_{n,k}(|x|^2) e^{-\pi|x|^2},
\]
where $r^\varepsilon_{n,k}$ is a polynomial of degree at most
$2k+\varepsilon$ that is not identically zero, vanishes at
$2,4,\dots,2k$, and is a linear combination of the polynomials
$p_{2j+\varepsilon}^{n/2-1}$ for $0 \le j \le k$.  The last
condition means that $\widehat{g}^\varepsilon_{n,k} =
(-1)^\varepsilon g^\varepsilon_{n,k}$.  The same arguments as in
Lemma~\ref{unique} shows that these functions exist and are
unique, up to scaling. We see no canonical way to scale them, so
we will not choose a preferred scaling.

The choice of $2,4,\dots,2k$ as forced root locations is inspired by the
norms of the vectors in the $E_8$ lattice.  One could also study the
analogous functions for the Leech lattice, but we have focused on the
simplest case.  Note that we use the exact vector norms, with no need to
modify them along the lines of \eqref{eq:betterrdef}.

\begin{conjecture}
\label{conj:convergence} As $k \to \infty$ with $n$ and
$\varepsilon$ fixed, $g^\varepsilon_{n,k}$ converges (when
suitably normalized) to a Fourier eigenfunction
$g^\varepsilon_{n}$ (not identically zero) that vanishes at all
radii of the form $\sqrt{2j}$.  If we view $g^\varepsilon_{n,k}$
as an entire function of $|x|$, then the convergence is uniform on
all compact subsets of $\C$.
\end{conjecture}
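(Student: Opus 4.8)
The plan is to compare $g^\varepsilon_{n,k}$ with an explicitly constructed candidate limit, with the comparison controlled by one quantitative \emph{well-conditioning} estimate for interpolation at the forced root locations; a compactness-plus-uniqueness argument is the fallback when the explicit limit is unavailable. Normalize $g^\varepsilon_{n,k}$ to have $L^2(\R^n)$-norm $1$, so it is the (essentially unique) unit vector spanning the one-dimensional kernel of the evaluation map $\mathrm{ev}_k\colon V_k\to\C^k$, $w\mapsto(w(\sqrt{2m}))_{m=1}^k$, where $V_k$ is the degree-$(\le 2k+\varepsilon)$ space of Fourier eigenfunctions of eigenvalue $(-1)^\varepsilon$, spanned by the Laguerre--Hermite functions $p_\varepsilon^{n/2-1}(|x|^2)e^{-\pi|x|^2},\,p_{2+\varepsilon}^{n/2-1}(|x|^2)e^{-\pi|x|^2},\,\dots,\,p_{2k+\varepsilon}^{n/2-1}(|x|^2)e^{-\pi|x|^2}$ (these being orthogonal in $L^2(\R^n)$).

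First I would construct the candidate limit $g^\varepsilon_n$: a nonzero radial Schwartz function on $\R^n$, a Fourier eigenfunction of eigenvalue $(-1)^\varepsilon$, vanishing at every radius $\sqrt{2j}$ with $j\ge1$. For $n=8$ and $24$ one can write it down from theta functions and quasimodular forms of weight $2-n/2$, as in \cite{V}; for general $n$ one would seek an analytic continuation in $n$ of those formulas, or a Laplace representation $g^\varepsilon_n(x)=\int_0^\infty\phi^\varepsilon(t)e^{-\pi t|x|^2}\,dt$ in which the eigenfunction property is the modular relation $t^{n/2-2}\phi^\varepsilon(1/t)=(-1)^\varepsilon\phi^\varepsilon(t)$ and vanishing at $\sqrt{2j}$ is a regularized moment condition on $\phi^\varepsilon$. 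Expanding in the same basis, $g^\varepsilon_n(x)=\sum_{j\ge0}c_j\,p_{2j+\varepsilon}^{n/2-1}(|x|^2)e^{-\pi|x|^2}$ with coefficients $c_j$ decaying faster than any power of $j$ because $g^\varepsilon_n$ is Schwartz.

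Next comes the comparison. Put $G_k=\sum_{j\le k}c_j\,p_{2j+\varepsilon}^{n/2-1}(|x|^2)e^{-\pi|x|^2}\in V_k$, so $G_k\to g^\varepsilon_n$ in $L^2(\R^n)$ and, by estimating Laguerre functions on compact sets, locally uniformly in the complex variable $|x|$. Since the $c_j$ decay rapidly while $|L_j^{n/2-1}(4\pi m)|\,e^{-2\pi m}=O(j^{C})$ uniformly for $m\ge1$, the vector $\mathrm{ev}_k(G_k)=(G_k(\sqrt{2m}))_m$ is super-polynomially small in $k$. Decompose $G_k=\alpha_k g^\varepsilon_{n,k}+r_k$ with $r_k$ orthogonal to $g^\varepsilon_{n,k}$; then $\mathrm{ev}_k(r_k)=\mathrm{ev}_k(G_k)$, so $\|r_k\|\le\|\mathrm{ev}_k(G_k)\|/\sigma_k$, where $\sigma_k$ is the smallest singular value of $\mathrm{ev}_k$ restricted to the orthogonal complement of $g^\varepsilon_{n,k}$. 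If $\sigma_k$ is at least $1/\mathrm{poly}(k)$, then $r_k\to0$, hence $|\alpha_k|\to\|g^\varepsilon_n\|>0$, hence $g^\varepsilon_{n,k}=(G_k-r_k)/\alpha_k$ converges to $g^\varepsilon_n/\|g^\varepsilon_n\|$ in $L^2$ and locally uniformly in $|x|$ on $\C$ --- which is the assertion of the conjecture for this normalization.

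The hard part will be the lower bound $\sigma_k\ge1/\mathrm{poly}(k)$, that is, well-conditioning of the structured Vandermonde/Gram matrix formed from the Laguerre--Hermite functions evaluated at $\sqrt2,\dots,\sqrt{2k}$. This is exactly the single-root analogue of what the paper's naive double-root construction \emph{loses} (a near-extraneous root appears, the norm blows up, and the packing bound reverses), whereas the modified locations \eqref{eq:betterrdef} are engineered to restore it; the content of the conjecture is that for single roots at the \emph{unmodified} $E_8$ radii $\sqrt{2j}$ no modification is needed, presumably because of the favorable large-$k$ spacing of these nodes against the oscillation of the Laguerre functions (compare the orthogonal-polynomial zero heuristics of \cite{deift}). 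I expect a proof of this to need genuinely new input; nothing routine produces a $k$-uniform lower bound of this shape. If one wants only the qualitative statement, one can avoid the explicit $g^\varepsilon_n$: a $k$-uniform bound $\int_{\R^n}|g^\varepsilon_{n,k}(x)|^2e^{2\pi\delta|x|^2}\,dx\le C$ for some fixed $\delta>0$ gives, via Montel's theorem, subsequential locally uniform limits that are nonzero Schwartz $(-1)^\varepsilon$-eigenfunctions vanishing at all $\sqrt{2j}$; upgrading this to convergence of the full sequence then requires a uniqueness statement for such functions, which for $n=8$ should follow from Poisson summation over $E_8$ applied to the eigenfunction and to auxiliary functions such as $|x|^2g$ (in the spirit of Lemma~\ref{fprime}), and in general from a Fourier interpolation formula for radial Schwartz functions sampled at the radii $\sqrt{2j}$.
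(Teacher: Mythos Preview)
The statement you are addressing is labeled a \emph{conjecture} in the paper, and the paper offers no proof of it --- only numerical evidence and, in Conjecture~\ref{conj:singleforce}, explicit candidate formulas for the limits $g^\varepsilon_n$ in half of the cases when $4\mid n$. There is therefore no proof in the paper to compare your argument against.

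Your outline is a sensible strategy but is not a proof, and you correctly identify the essential missing ingredient: the lower bound $\sigma_k\ge 1/\mathrm{poly}(k)$ on the smallest nonzero singular value of the evaluation map restricted to $(\ker\mathrm{ev}_k)^\perp$. Without it nothing controls $r_k$ and the comparison collapses; with it, incidentally, your argument would also yield uniqueness of the limiting eigenfunction up to scaling (run the comparison for any two Schwartz candidates and conclude they are proportional). Two smaller points are worth flagging. First, for the explicit candidate: the paper's own Conjecture~\ref{conj:singleforce} already supplies elementary closed forms built from $(\sin\pi|x|^2/2)\,e^{-\pi\sqrt{3}|x|^2/2}$ for one of the two eigenvalues whenever $4\mid n$, whereas for the other eigenvalue (e.g.\ $g^0_8$, whose imaginary roots are only \emph{nearly} at $\sqrt{-(2j-1)}$; see Table~\ref{table:g8roots}) and for general $n$ the paper has no candidate at all, and your proposed Laplace representation with a modular symmetry for $\phi^\varepsilon$ is speculative. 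Second, your fallback compactness-plus-uniqueness route presupposes a Fourier interpolation/uniqueness theorem for radial Schwartz eigenfunctions sampled at all $\sqrt{2j}$; results of that type are now known in dimensions $8$ and $24$ but postdate this paper, and nothing of the sort is available for general $n$.
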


Uniform convergence implies that $g^\varepsilon_{n}(x)$ is an entire function
of $|x|$.

These limiting functions are mysterious in general, but when $n$
is a multiple of $4$ we can conjecture explicit formulas for half
of them.  The remaining functions appear to be much more subtle,
as we will see shortly.

\begin{conjecture}
\label{conj:singleforce} If the scaling is chosen appropriately,
then
\[
g^0_4(x) = {\frac{\sin \pi |x|^2/2}{\pi |x|^2/2}}
e^{-\pi\sqrt{3}|x|^2/2}.
\]
If $n>4$ is a multiple of $4$ and $\varepsilon \not\equiv n/4
\pmod {2}$, then (again up to scaling)
\[
g^\varepsilon_{n}(x) = \left(\sin \pi |x|^2/2\right) e^{-\pi
\sqrt{3} |x|^2/2}
\]
if $n \equiv 0 \pmod{3}$,
\[
g^\varepsilon_{n}(x) = \left(\sin \pi |x|^2/2\right)
\left(\left(|x|^2 - \frac{(n+2)\sqrt{3}}{6\pi}\right)^2 -
\frac{n+2}{6\pi^2}\right) e^{-\pi \sqrt{3} |x|^2/2}
\]
if $n \equiv 1 \pmod{3}$, and
\[
g^\varepsilon_{n}(x) = \left(\sin \pi |x|^2/2\right)
\left(|x|^2-n/(2\pi\sqrt{3})\right) e^{-\pi \sqrt{3} |x|^2/2}
\]
if $n \equiv 2 \pmod{3}$.
\end{conjecture}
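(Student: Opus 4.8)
The plan is to split Conjecture~\ref{conj:singleforce} into an \emph{algebraic} part---that each proposed $g^\varepsilon_n$ really is a Fourier eigenfunction of the correct eigenvalue vanishing at every radius $\sqrt{2j}$---and an \emph{analytic} part, namely that the $g^\varepsilon_{n,k}$ actually converge to it. The algebraic part is a finite computation; the analytic part is the real content and the only genuine obstacle.

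For the algebraic part, write $z=|x|^2$ and note
\[
\left(\sin \tfrac{\pi z}{2}\right) e^{-\pi\sqrt 3 z/2}=\tfrac{1}{2i}\bigl(e^{-\pi e^{-i\pi/6} z}-e^{-\pi e^{i\pi/6} z}\bigr).
\]
Here the exponent $c=e^{\pm i\pi/6}$ is forced: the two exponentials differ in phase by a multiple of $2\pi$ exactly on $z\in 2\Z$ iff $\operatorname{Im}(c)=\tfrac12$, one needs $|c|=1$ so that the $n$-dimensional Gaussian Fourier transform $\T[e^{-\pi c|x|^2}]=c^{-n/2}e^{-\pi c^{-1}|x|^2}$ merely interchanges the two exponentials, and decay then pins down $\operatorname{Re}(c)=\tfrac{\sqrt3}{2}$; this is why $\sqrt 3$ and $e^{\pm i\pi/6}$ appear. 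Since $c^{-n/2}=e^{\mp i\pi n/12}$, applying $\T$ to the bare product shows it is a $\T$-eigenfunction precisely when $12\mid n$, with eigenvalue $(-1)^{n/12+1}$, which is exactly the class forced by $\varepsilon\not\equiv n/4\pmod 2$. When $12\nmid n$ one multiplies by a polynomial $P(|x|^2)$; using $\T\bigl[|x|^2 e^{-\pi c|x|^2}\bigr]=-\tfrac1\pi\partial_c\bigl(c^{-n/2}e^{-\pi c^{-1}|x|^2}\bigr)$ turns the eigenvalue equation into a finite, explicitly solvable linear system for the coefficients of $P$, with a solution of degree $1$ when $n\equiv 2\pmod 3$ and degree $2$ when $n\equiv 1\pmod 3$; one checks these coincide with the displayed formulas (e.g.\ for $n=8$ one recovers $c(1+e^{-i\pi/3})=\tfrac4\pi e^{-i\pi/6}$, giving $P(z)=z-4/(\pi\sqrt3)$). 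The $n=4$ case degenerates, with the ``polynomial'' becoming $1/(\pi z/2)$, and is checked by one direct calculation. Vanishing at $\sqrt{2j}$, non-triviality, holomorphy in $|x|$, and the required decay are then immediate.

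For the analytic part I would work from the determinantal formula for $g^\varepsilon_{n,k}$ provided by the method of Lemma~\ref{unique}:
\[
g^\varepsilon_{n,k}(x)=e^{-\pi|x|^2}\sum_{j=0}^{k}(-1)^{j}A^{(k)}_{j}\,p_{2j+\varepsilon}(|x|^2),
\]
where $A^{(k)}_j$ is the $k\times k$ minor obtained by deleting the $j$th column of the matrix with entries $p_{2i+\varepsilon}(2\ell)$, $1\le i\le k$, $1\le\ell\le k$. Since $p_m(z)=L^{n/2-1}_m(2\pi z)$, I would attempt to resum this expansion with the Laguerre generating function $\sum_m L^\alpha_m(y)t^m=(1-t)^{-\alpha-1}e^{-yt/(1-t)}$: the forced vanishing at $2,4,\dots,2k$ should, as $k\to\infty$, localize the relevant $t$ on the unit circle at the value making $e^{-2\pi\,(\mathrm{root})\,t/(1-t)}$ oscillate with period $2$ in the root, and the resulting complex Gaussians are precisely $e^{-\pi e^{\pm i\pi/6}|x|^2}$. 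Concretely, a steepest-descent analysis of the ratios $A^{(k)}_{j+1}/A^{(k)}_{j}$ (equivalently, of the entire interpolant vanishing at $2,4,\dots,2k$ inside the $\T$-eigenspace) should show that after dividing by $A^{(k)}_0$ the series converges, locally uniformly in $|x|$ on all of $\C$, to a constant multiple of $g^\varepsilon_n$. An attractive alternative---available here because the conjecture only asks for $4\mid n$ and $\varepsilon\not\equiv n/4$---is to guess a closed form for $g^\varepsilon_{n,k}$ itself at finite $k$ (a combination of $\sin(\pi z/2)$, a degree-$O(1)$ polynomial, and $e^{-\pi\sqrt3 z/2}$ with a controlled remainder), verify directly that it lies in the eigenspace and vanishes at $2,4,\dots,2k$, and then invoke uniqueness up to scaling (again by the argument of Lemma~\ref{unique}) to conclude it \emph{is} $g^\varepsilon_{n,k}$; the limit can then be read off.

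The main obstacle is the a priori control needed to pass to the limit. One must rule out the possibility---responsible for the failure of the naive sphere-packing construction, cf.\ Table~\ref{table:naive}---that the normalized $g^\varepsilon_{n,k}$ grow faster than the candidate as $k\to\infty$; that is, one needs $\{g^\varepsilon_{n,k}/A^{(k)}_0\}$ to be a normal family with uniformly controlled order-one growth in the variable $|x|^2$, so that every subsequential limit is an entire $\T$-eigenfunction vanishing on $\sqrt{2\Z_{>0}}$ of the minimal possible type. Then a Hadamard-factorization / Phragm\'en--Lindel\"of argument should show that such a function is unique up to scaling and hence equals $g^\varepsilon_n$. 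Establishing the normal-family bound (equivalently, controlling the asymptotics of the minors $A^{(k)}_j$) is the crux; the uniqueness of the limit, once it is known to exist, seems within reach, but proving existence appears to require genuinely new input, which is why we state it only as a conjecture.
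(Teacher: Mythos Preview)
Your proposal is accurate in its overall assessment: the statement is a \emph{conjecture}, and the paper does not prove it. What the paper does prove is the Proposition immediately following, namely that the displayed functions are Fourier eigenfunctions with the correct eigenvalues. Your ``algebraic part'' covers exactly this, and by essentially the same method as the paper: write $(\sin \pi|x|^2/2)e^{-\pi\sqrt{3}|x|^2/2}$ as a difference of complex Gaussians $e^{-\pi c|x|^2}$ with $c=e^{\pm i\pi/6}$, use that $c$ is a $12$th root of unity to read off the eigenvalue, differentiate in the parameter $c$ to handle the polynomial factors when $n\not\equiv 0\pmod 3$, and treat $n=4$ separately (the paper integrates in $\alpha$ where you note the degeneration to $1/(\pi z/2)$). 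You add a nice heuristic for why $e^{\pm i\pi/6}$ is forced, which the paper omits, but the verification itself is the same.

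Your ``analytic part''---the determinantal/generating-function attack on convergence, the normal-family and Hadamard-factorization outline---goes well beyond anything the paper attempts; the paper makes no move toward proving convergence and simply records Conjecture~\ref{conj:convergence} and Conjecture~\ref{conj:singleforce} as open. Your final paragraph correctly identifies the a priori growth control on the minors as the genuine missing ingredient, and your closing sentence (``which is why we state it only as a conjecture'') is exactly in line with the paper's stance.
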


We have no explanation for the exceptional behavior in four dimensions.

\begin{proposition}
The functions listed in Conjecture~\ref{conj:singleforce} are all
eigenfunctions of the Fourier transform, with the appropriate
eigenvalues.
\end{proposition}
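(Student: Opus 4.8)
The plan is to reduce every function listed in Conjecture~\ref{conj:singleforce} to the Fourier transform of complex Gaussians. First I would record the basic identity: for $\Re\lambda>0$ the Fourier transform \eqref{fhat} of $e^{-\pi\lambda|x|^2}$ on $\R^n$ is $\lambda^{-n/2}e^{-\pi|t|^2/\lambda}$, valid with the principal branch (by analytic continuation along a path in the right half-plane from $\lambda=1$) and holomorphic in $\lambda$ there. The relevance of the exponent $-\pi\sqrt3\,|x|^2/2$ is that $\omega:=\tfrac{\sqrt3}{2}-\tfrac i2=e^{-i\pi/6}$ lies in the open right half-plane and $\tfrac{\sqrt3}{2}+\tfrac i2\cdot(-1)\cdot(\cdot)$ rearranges so that each of these functions except $g^0_4$ has the shape $g=\operatorname{Im}\!\bigl(P(|x|^2)e^{-\pi\omega|x|^2}\bigr)$ with $P$ a real polynomial of degree $0$, $2$, or $1$ according as $n\equiv0$, $1$, or $2\pmod 3$ (the $\sin\pi|x|^2/2$ factor being the imaginary part of $e^{i\pi|x|^2/2}$). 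Since a radial $h$ satisfies $\widehat{\,\overline h\,}=\overline{\,\widehat h\,}$, we have $\widehat g=\operatorname{Im}\widehat h$ for $h(x)=P(|x|^2)e^{-\pi\omega|x|^2}$, so it suffices to compute $\widehat h$.

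Next I would apply $(-\tfrac1\pi\partial_\lambda)^j$ to the Gaussian identity to get a closed formula for $\widehat{\,|x|^{2j}e^{-\pi\lambda|x|^2}\,}$, then specialize to $\lambda=\omega$ using $1/\omega=\overline\omega$. Collecting terms, $\widehat h$ equals $\omega^{-n/2}e^{-\pi\overline\omega|t|^2}$ times an explicit polynomial in $|t|^2$ whose coefficients are polynomials in $\overline\omega$; requiring this polynomial to be a scalar multiple of $P(|t|^2)$ pins down the free constants in $P$ and the scalar. The elementary identities $1+\overline\omega^{\,2}=\sqrt3\,\overline\omega$ and $1-\overline\omega^{\,4}=\sqrt3\,\omega$ are exactly what make those constants collapse to $a=(n+2)\sqrt3/(6\pi)$, $b=(n+2)/(6\pi^2)$ when $n\equiv1\pmod3$ and to $c=n/(2\pi\sqrt3)$ when $n\equiv2\pmod3$, matching Conjecture~\ref{conj:singleforce}. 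One then finds $\widehat h=\beta\,\overline h$ with $\beta$ a power of $\omega$; the hypotheses $4\mid n$ together with the value of $n\bmod 3$ force that exponent to be a multiple of $6$, so $\beta=(e^{-i\pi/6})^{6m}=(-1)^m=\pm1$. Taking imaginary parts, $\widehat g=\beta\operatorname{Im}(\overline h)=-\beta\,g$, and a short computation using $\varepsilon\equiv n/4+1\pmod 2$ confirms $-\beta=(-1)^\varepsilon$, the eigenvalue that should be attached to $g^\varepsilon_n$.

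The one function not of this form is $g^0_4$, because of the factor $|x|^{-2}$, so I would handle it separately via $\dfrac{\sin z}{z}=\displaystyle\int_0^1\cos(zs)\,ds$. This writes $g^0_4(x)=\operatorname{Re}\int_0^1 e^{-\pi\lambda_s|x|^2}\,ds$ with $\lambda_s=(\sqrt3-is)/2$, all lying in $\Re\lambda>0$; applying the Gaussian identity under the integral (justified by dominated convergence on compact sets of $t$) and substituting $w=1/\lambda_s$ turns $\widehat{g^0_4}$ into $\operatorname{Re}\!\bigl(\tfrac2i\int_{2/\sqrt3}^{e^{i\pi/6}}e^{-\pi|t|^2 w}\,dw\bigr)$ along any path between those endpoints. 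Integrating the exponential and discarding the purely real contribution of the real endpoint $2/\sqrt3$ recovers $g^0_4(t)$ exactly, so $\widehat{g^0_4}=g^0_4$, the eigenvalue $(-1)^0$.

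I do not expect a conceptual obstacle; the argument is a bounded calculation. The delicate part is the bookkeeping in the middle step: keeping track of the branch of $\lambda^{-n/2}$ under the $\lambda$-differentiation, and correctly counting the accumulated power of $\omega$ against the congruence class of $n$ so that the resulting Fourier multiplier is genuinely $\pm1$ and carries the right sign $(-1)^\varepsilon$. I would isolate that as a short lemma computing $\widehat{\,P(|x|^2)e^{-\pi\lambda|x|^2}\,}$ for $\deg P\le 2$, after which the three cases $n\equiv0,1,2\pmod 3$ reduce to three quick substitutions and $g^0_4$ to the contour computation above.
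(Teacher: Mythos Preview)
Your proposal is correct and follows essentially the same approach as the paper's own proof: write the functions via complex Gaussians $e^{-\pi\lambda|x|^2}$ with $\lambda$ a primitive $12$th root of unity, differentiate in $\lambda$ to handle the polynomial prefactors, and treat the exceptional $n=4$ case by integrating in $\lambda$ instead (your sinc integral $\tfrac{\sin z}{z}=\int_0^1\cos(zs)\,ds$ is exactly that integration written out). Your account is in fact more explicit than the paper's sketch, particularly in tracking the accumulated power of $\omega$ to verify the eigenvalue sign $(-1)^\varepsilon$.
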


\begin{proof}[Sketch of proof]
This can be verified by straightforward calculation.  Because $x
\mapsto e^{-\pi |x|^2}$ is its own Fourier transform, it follows
that when $\mathop{\textup{Re}}(\alpha) > 0$, the Fourier
transform of $x \mapsto e^{-\pi |x|^2 \alpha}$ is $x \mapsto
e^{-\pi |x|^2/\alpha}/\alpha^{n/2}$.  Differentiating with respect
to $\alpha$ allows one to compute the Fourier transform of $x
\mapsto |x|^{2k} e^{-\pi |x|^2 \alpha}$ for $k \in \{1,2,\dots\}$.
Finally, we write
\[
\left(\sin \pi |x|^2/2\right) e^{-\pi \sqrt{3} |x|^2/2} =
\frac{e^{-\pi|x|^2 (\sqrt{3}/{2}-{i}/{2})} -
e^{-\pi |x|^2 ({\sqrt{3}}/{2}+{i}/{2})}}{2i}.
\]
The result when $n \equiv 0 \pmod{3}$ follows easily from the fact that
$\sqrt{3}/2 + i/2$ is a $12$-th root of unity, and the results when $n
\not\equiv 0 \pmod{3}$ follow from similar but slightly more elaborate
calculations.  The trickiest case is when $n=4$, because it involves dividing
by $|x|^2$.  That can be handled by integrating with respect to $\alpha$
instead of differentiating.
\end{proof}

Note that the limiting functions in Conjecture~\ref{conj:singleforce}
are entire and have imaginary roots at $\sqrt{-2j}$ for each $j > 0$.
It is not clear where these imaginary roots come from.  There are also
a finite number of extraneous real roots, which appear to be needed to
create Fourier eigenfunctions.  If one plots the roots of these
functions  (as in Figure~\ref{complexrootssingle}), one sees the expected roots on the real axis, the surprising
purely imaginary roots, the finite set of extraneous roots, and a
V-shaped collection of non-real roots spreading out from the imaginary
axis.  It seems that as $k \to
\infty$, that final collection tends to infinity and contributes no
roots in the limit.  It does serve, however, to reduce the exponent in
the Gaussian from the original $-\pi$ to $-\pi\sqrt{3}/2$.

\begin{figure}
\centering
\begin{tikzpicture}[scale=0.75]
\draw (-6.5,0) -- (6.5,0);
\draw (0,-3.5) -- (0,3.5);
\foreach \x/\y in {0.828399/-2.95336, 0.828399/2.95336, 1.10293/-3.03665, 1.10293/3.03665,
0.571941/-2.87388, 0.571941/2.87388, 1.40232/-3.12556, 1.40232/3.12556,
0.329663/-2.79707, 0.329663/2.79707, 0.103657/-2.72612,
0.103657/2.72612, 1.73981/-3.22363, 1.73981/3.22363, 2.15019/-3.34023,
2.15019/3.34023, 0/2.44936, 0/2.00000, 0/1.41421, 0/0, 0.857387/0,
1.41421/0, 2.00000/0, 2.44949/0, 2.82843/0, 3.16228/0, 3.46410/0,
3.74166/0, 4.00000/0, 4.24264/0, 4.47214/0, 4.69042/0, 4.89898/0,
5.09902/0, 5.29150/0, 5.47723/0, 5.65685/0, 5.83095/0, 6.00000/0,
6.16441/0, 6.32456/0, -0.828399/2.95336, -0.828399/-2.95336,
-1.10293/3.03665, -1.10293/-3.03665, -0.571941/2.87388,
-0.571941/-2.87388, -1.40232/3.12556, -1.40232/-3.12556,
-0.329663/2.79707, -0.329663/-2.79707, -0.103657/2.72612,
-0.103657/-2.72612, -1.73981/3.22363, -1.73981/-3.22363,
-2.15019/3.34023, -2.15019/-3.34023, 0/-2.44936, 0/-2.00000,
0/-1.41421, 0/0, -0.857387/0, -1.41421/0, -2.00000/0, -2.44949/0,
-2.82843/0, -3.16228/0, -3.46410/0, -3.74166/0, -4.00000/0, -4.24264/0,
-4.47214/0, -4.69042/0, -4.89898/0, -5.09902/0, -5.29150/0, -5.47723/0,
-5.65685/0, -5.83095/0, -6.00000/0, -6.16441/0, -6.32456/0}
{
\draw[fill=black] (\x,\y) circle (0.03);
}
\foreach \i in {-6,-5,-4,-3,-2,-1,0,1,2,3,4,5,6}
{
\draw (\i,-0.15) -- (\i,0.15);
}
\foreach \i in {-3,-2,-1,0,1,2,3}
{
\draw (-0.15,\i) -- (0.15,\i);
}
\end{tikzpicture}
\caption{The complex roots of $g^1_{8,20}$.}
\label{complexrootssingle}
\end{figure}
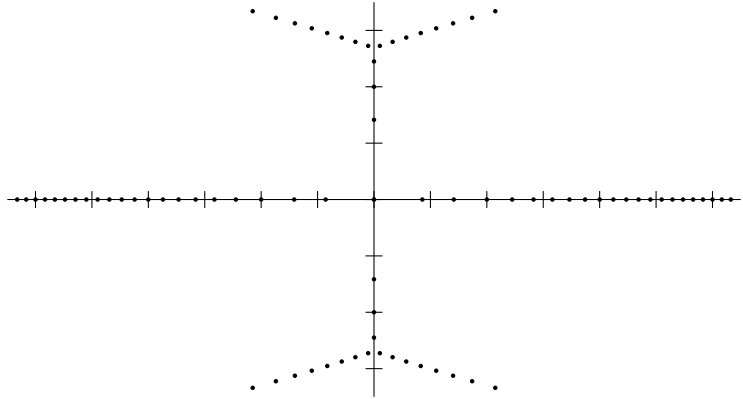

Note also that the root of $g^1_{8m}$ at the origin must occur by
Poisson summation over $E_8^m$ (so it is not extraneous in the
same sense, even though it was not deliberately forced).

Whenever the dimension is a multiple of $4$,
Conjecture~\ref{conj:singleforce} predicts one of the two eigenfunctions
Conjecture~\ref{conj:convergence} asserts exists. The other eigenfunction is more
mysterious.  It does not have imaginary roots at $\sqrt{-2j}$.  Instead, it
\emph{almost} has roots at $\sqrt{-(2j-1)}$, but not quite.  For example,
$g^0_8$ appears to vanish at the square roots of the numbers from
Table~\ref{table:g8roots}. The precise perturbations away from the odd
integers depend on the dimension.  We do not know an explanation for this
interesting behavior.  The most natural possibility is that these functions
are given by a dominant term, which has roots at exactly $\sqrt{-(2j-1)}$,
plus some lower order terms.  However, we have been unable to conjecture a
formula of this sort.

\begin{table}
\caption{Squares of the imaginary roots of $g^0_8$.}
\label{table:g8roots}
\begin{tabular}{r}
\toprule
$-0.980217784819734913\dots$\\
$-2.999513816437548808\dots$\\
$-4.999987267218782800\dots$\\
$-6.999999651348489332\dots$\\
$-8.999999990471834691\dots$\\
$-10.999999999741389569\dots$\\
$-12.999999999993001822\dots$\\
$-14.999999999999810493\dots$\\
$-16.999999999999994854\dots$\\
$-18.999999999999999859\dots$\\
$-20.999999999999999996\dots$\\
$-22.999999999999999999\dots$\\
\bottomrule
\end{tabular}
\end{table}

More general convergence theorems are likely true.  For example,
in four dimensions, if we force an extra factor of $|x|^2-c$ in
the $+1$ eigenfunction, then the resulting functions seem to
converge to
\[
g^0_4(x) \left(|x|^2-c\right) \left(\pi^2|x|^4 +
(c\pi^2-2\pi\sqrt{3})|x|^2+c(c\pi^2-2\pi\sqrt{3})\right).
\]
(Note that now the extraneous roots need not be real.) It seems
plausible that in each case covered by
Conjecture~\ref{conj:singleforce}, forcing some additional finite
set of roots simply creates an additional factor corresponding to
a finite set of extraneous roots.  That does not seem to be true
for the mysterious eigenfunctions not predicted by Conjecture~\ref{conj:singleforce} (i.e., forcing another root does
not seem to multiply them by a polynomial factor, but instead
changes the imaginary root perturbations as well).

In dimensions that are not multiples of four, we do not know any
closed form expressions for the limiting eigenfunctions.  It
appears that they behave somewhat like the mysterious
eigenfunctions in the multiple-of-four case (i.e., those not predicted by Conjecture~\ref{conj:singleforce}). For example, $g^0_2$
appears to have imaginary roots at some perturbation of the square
roots of $-2.5$, $-4.5$, $-6.5$, etc., and $g^1_2$ at some
perturbation of the square roots of $-1.5$, $-3.5$, $-5.5$, etc.
We have focused on the multiples of four because they seem simpler
(and perhaps more relevant to sphere packing).

\section*{Acknowledgements}

We are grateful to Percy Deift, Noam Elkies, Doug Hardin, Abhinav Kumar, Greg Moore, Ed
Saff, Alex Samorodnitsky, Terence Tao, and Frank Vallentin for helpful
conversations.

\end{document}